\newtheorem{theorem}{Theorem}[section]
\newtheorem{corollary}[theorem]{Corollary}
\newtheorem{lemma}[theorem]{Lemma}
\newenvironment{proof}[1][Proof]{\textbf{#1.} }{\hfill\rule{0.5em}{0.5em}}
{\catcode`\@=11\global\let\AddToReset=\@addtoreset
\AddToReset{equation}{section}

\AddToReset{theorem}{section}

\title{Lorentz gradient estimates for a class of elliptic $p$-Laplacian equations with a Schr{\"o}dinger term}
\author{Minh-Phuong Tran\thanks{Applied Analysis Research Group, Faculty of Mathematics and Statistics, Ton Duc Thang University, Ho Chi Minh City, Vietnam; \texttt{tranminhphuong@tdtu.edu.vn}}, Thanh-Nhan Nguyen\footnote{Corresponding author.} \thanks{Department of Mathematics, Ho Chi Minh City University of Education, Ho Chi Minh City, Vietnam; \texttt{nhannt@hcmue.edu.vn}},  Gia-Bao Nguyen\thanks{Department of Mathematics, Ho Chi Minh City University of Education, Ho Chi Minh City, Vietnam; \texttt{baong@hcmue.edu.vn}}}

\date{\today}

\begin{document}
 
\maketitle
\begin{abstract}
We prove in this paper the global Lorentz estimate in term of fractional-maximal function for gradient of weak solutions to a class of $p$-Laplace elliptic equations containing a non-negative Schr\"odinger potential which belongs to reverse H\"older classes. In particular, this class of $p$-Laplace operator includes both degenerate and non-degenerate cases. The interesting idea is to use an efficient approach based on the level-set inequality related to the distribution function in harmonic analysis.

\medskip

\medskip

\medskip

\noindent 

\medskip

\noindent Keywords: Gradient estimates; $p$-Laplace; Schr\"odinger term;  Fractional maximal functions; distribution function; Lorentz spaces.

\end{abstract}   
                  
\section{Introduction and main results}
\label{sec:intro}
In this article, we are interested in the global regularity for a class of quasi-linear elliptic equations, whose prototype coming from the (degenerate/non-degenerate) $p$-Laplace Schr\"odinger equations
\begin{align}\label{eq:p-Lap}
- \Delta_{p,\sigma} u + \mathbb{V} |u|^{p-2}u &=  -\mathrm{div}(|\mathbf{f}|^{p-2}\mathbf{f}) \ \text{ in } \ \Omega, \mbox{ and } \ u = \mathsf{g}  \ \text{ on } \  \partial \Omega,
\end{align}
where $\Delta_{p,\sigma} u$ denotes the $p$-Laplace operator for $p>1$, with degeneracy $\sigma\in [0,1]$. To be more precise, we consider such operator that can be written in the Euclidean setting as
\begin{align*}
\Delta_{p,\sigma} u:= \mathrm{div}\left(\left(\sigma^2+|\nabla u|^2\right)^{\frac{p-2}{2}} \nabla u\right)=\sum_{i=1}^n{\frac{\partial}{\partial x_i}\left[\left(\sigma^2+|\nabla u|^2\right)^{\frac{p-2}{2}}\frac{\partial u}{\partial x_i} \right]}.
\end{align*}
Moreover, the problem is considered in $\Omega \subset \mathbb{R}^n$-is a Reifenberg flat domain, $n \ge 2$; with fundamental data $\mathbf{f} \in L^p(\Omega; \mathbb{R}^n)$ and boundary condition $\mathsf{g} \in W^{1,p}(\Omega)$. Toward this goal, we establish the global Lorentz estimates for the gradient of weak solutions to \eqref{eq:p-Lap} in terms of \emph{fractional maximal operators} as follows
\begin{align}\label{goal}
\mathbf{M}_\alpha(\sigma^p + |\mathbf{f}|^p+|\nabla \mathsf{g}|^p + \mathbb{V}|\mathsf{g}|^p) \in L^{q,s}(\Omega) \Longrightarrow \mathbf{M}_\alpha(\sigma^p + |\nabla u|^p + \mathbb{V}|u|^p) \in L^{q,s}(\Omega),
\end{align}
where $\mathbf{M}_{\alpha}$ denotes the fractional-maximal operator of order $\alpha \in [0,n)$. In the sense of calculus of variations, it is concerned with the problem of minimizing the functional with additional potential $\mathbb{V}$
\begin{align}
\label{eq:min}
W^{1,p}(\Omega) \ni w \quad \mapsto \quad \frac{1}{p}\int_{\Omega}{\left( |\nabla w|^p + \mathbb{V}|w|^p \right) dx} - \int_\Omega{\langle |\mathbf{f}|^{p-2}\mathbf{f},\nabla w \rangle dx},
\end{align}
or in another word, the equation \eqref{eq:p-Lap} is the Euler-Lagrange equation whose solution minimizes the functional in \eqref{eq:min}; and this problem arises naturally in various areas of applied mathematics and physics. The problems with Schr\"odinger term $\mathbb{V}$ have their own difficulties and curiosities and received a special attention in the scientific community through the years. For instance, they appear frequently in quantum mechanics, non-Newtonian fluid theory, some non-linear phenomena, plasma physics, turbulent gas flow in porous media and so on (see \cite{BS1991,GV1988}). Not only the existence, the study of regularity properties or Calder\'on-Zygmund estimates for weak solutions have also played a central role in the research of nonlinear elliptic equations in recent years. Further, there has been continuous attention on these regularity properties for elliptic problems with Schr\"odinger potentials. In particular, for the case $p=2$, the equation \eqref{eq:p-Lap} reduces to a time independent Schr\"odinger equation and the $L^q$-estimates has been established by Shen in \cite{Shen} whenever the non-negative potential $\mathbb{V}$ belongs to the reverse H\"older class $\mathbb{RH}^\theta$, for some $\theta \ge \frac{n}{2}$ (a more detailed description of such class $\mathbb{RH}^\theta$ can be found below). In another interesting work \cite{Sugano}, Sugano also provided a result when $\mathbb{V}$ belongs to $\mathbb{RH}^\theta$ class which includes non-negative polynomials. As an extension of these results, Bramanti \textit{et al.} in \cite{BBHV} proved $L^q$-estimates for non-divergence form linear elliptic equations with VMO coefficients; and recently results for nonlinear divergence elliptic equations were derived by Lee and Ok in \cite{LO_schro}. Moreover, with additional assumptions on domain $\Omega$ (Reifenberg flat domain), nonlinearity $\mathbb{A}$ (satisfying a small BMO condition) and on potential $\mathbb{V}$ (in $\mathbb{RH}^\theta$ class and its appropriate Morrey norm is bounded), they stated and proved the global Calder\'on-Zygmund estimates for homogeneous problems in the same paper.

The present work is motivated by these recent contributions to the Calder\'on-Zygmund theory with a wealth of technical approaches and regularity results. It is a natural outgrowth of the previous works: the new gradient estimates via the fractional-maximal operators, that play a decisive role in our proofs, were first proposed in~\cite{PNJDE,PNmix,PNnonuniform}; and a Lorentz regularity extends the global Calder{\'o}n-Zygmund estimates established in~\cite{LO_schro} for quasi-linear elliptic Schr\"odinger equations. It is worth emphasizing that regularity results proved in our paper are also hold for a larger and more general class of elliptic equations than the one in~\eqref{eq:p-Lap}. More precisely, we deal with a nonhomogeneous Dirichlet problem for quasilinear Schr\"odinger equations of the type
\begin{align}\label{eq:diveq}
\begin{cases}
-\mathrm{div} \mathbb{A}(x,\nabla u) + \mathbb{V} |u|^{p-2}u &= \ -\mathrm{div}(|\mathbf{f}|^{p-2}\mathbf{f}) \quad \text{in} \ \ \Omega, \\
\hspace{2cm} u &=\ \ \mathsf{g}  \qquad \qquad \qquad \text{on} \ \ \partial \Omega,
\end{cases}
\end{align}
where the quasi-linear operator $\mathbb{A}$ is a Carath\'eodory function defined in $\Omega \times \mathbb{R}^n$ with valued in $\mathbb{R}^n$ and satisfied a version of $p$-monotone condition for some $p>1$. Here, we consider $\mathbb{A}$ under $(p,\sigma)$-monotone conditions for $p \in (1,n)$ and $\sigma \in [0,1]$, that means there exists a constant $\Upsilon>0$ satisfying
\begin{align}\label{eq:A1}
 \left| \mathbb{A}(x,\zeta) \right| & \le \Upsilon \left(\sigma^2 + |\zeta|^2 \right)^{\frac{p-1}{2}}, \\ \label{eq:A1b}
 |\partial_{\zeta} \mathbb{A}(x,\zeta)| & \le \Upsilon \left(\sigma^2 + |\zeta|^2 \right)^{\frac{p-2}{2}}, \\
\label{eq:A2}
\left( \mathbb{A}(x,\zeta_1)-\mathbb{A}(x,\zeta_2) \right) \cdot \left( \zeta_1 - \zeta_2 \right) & \ge \Upsilon^{-1} \left(\sigma^2 + |\zeta_1|^2 + |\zeta_2|^2 \right)^{\frac{p-2}{2}}|\zeta_1 - \zeta_2|^2,
\end{align}
for any $\zeta$, $\zeta_1$, $\zeta_2$ in $\mathbb{R}^n$  with a given constraint $\sigma^2 + |\zeta_1|^2 + |\zeta_2|^2 \neq 0$ and almost every $x$ in $\Omega$. Further on, we remark that, in the conditions considered above, $\sigma>0$ regards the non-degenerate problem and $\sigma=0$ for the degenerate case. A typical example of $\mathbb{A}$ satisfying the above conditions is $\mathbb{A}(x,\zeta) = (\sigma^2+|\zeta|^2)^{\frac{p-2}{2}}\zeta$ which gives rise to the degenerate/non-degenerate $p$-Laplacian equation~\eqref{eq:p-Lap}. Moreover, for the purpose of our current work, the  nonlinear operator $\mathbb{A}$ must obey the smallness condition of BMO type with respect to $x$, also known as \emph{the $(\delta,r)$-BMO condition}, this means there is a small number $\delta>0$ such that 
\begin{align}\label{cond:BMO}
[\mathbb{A}]^{r} = \sup_{y \in \mathbb{R}^n, \ 0<\varrho\le r} \fint_{B_{\varrho}(y)} \left( \sup_{\zeta \in \mathbb{R}^n \setminus \{0\}} \frac{|\mathbb{A}(x,\zeta) - \overline{\mathbb{A}}_{B_{\varrho}(y)}(\zeta)|}{(\sigma^2 +|\zeta|^2)^{\frac{p-1}{2}}} \right) dx \le \delta,
\end{align}
where $\overline{\mathbb{A}}_{B_{\varrho}(y)}(\zeta)$ stands for the average of $\mathbb{A}(\cdot,\zeta)$ over the ball $B_{\varrho}(y)$. It is remarkable that the class of operators satisfying such small BMO condition is larger than that of ones satisfying VMO condition. The $(\delta,r)$-BMO condition allows the nonlinearity $\mathbb{A}(x,\zeta)$ may be discontinuous in $x$ and this can be generally used as an appropriate substitute for the VMO condition originally defined by Sarason in \cite{Sarason}.

Regarding the potentials, we consider general non-negative Schr\"odinger term $\mathbb{V} \in L^1_{\mathrm{loc}}(\mathbb{R}^n; \mathbb{R}^+)$ satisfying $\mathbb{V} \in \mathbb{RH}^{\theta}$ for $\frac{n}{p} \le \theta < n$. It notices that $\mathbb{RH}^{\theta}$ denotes the reverse H{\"o}lder class containing locally integrable functions  that satisfy the \emph{reverse H\"older inequality}
\begin{align}\label{RH-ineq}
\displaystyle{\left(\fint_{B} \mathbb{V}^{\theta}(z) dz\right)^{\frac{1}{\theta}}} &\le C \displaystyle{\fint_{B} \mathbb{V}(z)dz}, 
\end{align}
for every ball $B \subset \mathbb{R}^n$. Furthermore, let us emphasize that $\mathbb{V}^{\frac{1}{p}}|\omega| \in L^p(\Omega)$ for all $\omega \in W^{1,p}(\Omega)$. Indeed, by denoting $\overline{\omega}_{\Omega}$ as the integral average of $\omega$ on $\Omega$ and noting that $1<  \frac{p\theta}{\theta-1} \le \frac{np}{n-p}$ for all $\theta \in [\frac{n}{p},n)$, we may apply H{\"o}lder and Sobolev's inequalities to get
\begin{align*}
\int_{\Omega} \mathbb{V}|\omega-\overline{\omega}_{\Omega}|^p dx & \le \left(\int_{\Omega} \mathbb{V}^{\theta}(x) dx\right)^{\frac{1}{\theta}}\left(\int_{\Omega} |\omega-\overline{\omega}_{\Omega}|^{\frac{p\theta}{\theta-1}} dx\right)^{\frac{\theta-1}{\theta}} \\
& \le C \left(\int_{\Omega} \mathbb{V}(x) dx\right)\left(\int_{\Omega} |\nabla \omega|^{p} dx\right). 
\end{align*}
In many issues related to physics, Schr\"odinger equations are usually governed by octic potential, decatic potential, polynomial potential, etc. A good example of a function that belongs to reverse H\"older class is $\mathbb{V}(x) = |x|^{-s} \in \mathbb{RH}^\theta$ for $s<\frac{n}{\theta}$ or when $\theta=\infty$, positive polynomials belong to $\mathbb{RH}^\theta$.

Due to the analysis of Calder\'on-Zygmund estimates proved by Lee and Ok in \cite[Theorem 2.3]{LO_schro}, it is reasonable to add one more assumption on potential $\mathbb{V}$ as follows
\begin{align}\label{eq:Morrey_V}
\|\mathbb{V}\|_{L^{\gamma; p\gamma}(\Omega)} := \sup_{0<\varrho< \mathrm{diam}(\Omega); \, \xi \in \Omega}{\varrho^{p-\frac{n}{\gamma}}}\|\mathbb{V}\|_{L^{\gamma}(B_{\varrho}(\xi)\cap\Omega)}  \le 1.
\end{align}

In order to obtain the global estimate in the non-smooth domain $\Omega$, our equations here defined in the Reifenberg flat domain. This is the minimal geometric requirement assumed on the boundary $\partial\Omega$ of domain to ensure the main results of the geometric analysis continue to hold true in $\Omega$. Roughly speaking, in our main results we will prove the regularity under an assumption that $\Omega$ is $(\delta,r)$-Reifenberg with a small value of $\delta$, that is: for any $x \in \partial \Omega$ and $\varrho \in (0,r]$, there is a coordinate system $\{\xi_1,\xi_2,...,\xi_n\}$ with origin at $x$ satisfying
\begin{align}\label{hyp:R}
B_{\varrho}(x) \cap \{\xi: \ \xi_n > \delta \varrho\} \subset B_{\varrho}(x) \cap \Omega \subset B_{\varrho}(x) \cap \{\xi: \ \xi_n > -\delta \varrho\}.
\end{align}
Here, we write the set $\{\xi: \ \xi_n > c\}$ instead of the set $\{\xi = (\xi_1, \xi_2, ..., \xi_n) \in \mathbb{R}^n: \ \xi_n > c\}$, for the sake of brevity. This type of domain is flat in the sense that its boundary is close to planes or hyperplanes at every small scale. The Reifenberg flat domain is more much general than a Lipschitz domain with sufficiently small Lipschitz constants. This class first appeared in the study of Plateau problem (see \cite{ER60}) by Reifenberg and later exploited by several authors (see \cite{BW2,SSB4, MT2010, LM1} and many references therein). 

As mentioned earlier, taking the advantage of fractional maximal operators $\mathbf{M}_\alpha$, our approach could be a new approach for deriving global gradient estimates and applicable to various types of nonlinear problems. Let us give hereafter the definition of fractional maximal operators and to make our strategy precise, we also include the \emph{boundedness property} of $\mathbf{M}_\alpha$ in Lemma \ref{lem:M_alpha} below (see~\cite{PNJDE,MPTNsub,PNnonuniform} for its detailed proof). For a given $\alpha \in [0,n]$, the fractional maximal operator $\mathbf{M}_\alpha$ is defined by
\begin{align}\label{eq:Malpha}
\mathbf{M}_\alpha h(y) = \sup_{\varrho>0}{\varrho^\alpha \fint_{B_{\varrho}(y)}{|h(x)|dx}}, \quad y \in \mathbb{R}^n, \quad h \in L^1_{\mathrm{loc}}(\mathbb{R}^n;\mathbb{R}^+).
\end{align}
When $\alpha=0$, the operator $\mathbf{M}_0$ is well-know as the Hardy-Littlewood function $\mathbf{M}$ below
\begin{align*}
\mathbf{M}h(y) = \sup_{\varrho>0}{\fint_{B_{\varrho}(y)}|h(x)|dx}, \quad y \in \mathbb{R}^n.
\end{align*}
\begin{lemma}\label{lem:M_alpha}
Let $s \ge 1$ and $0 \le \alpha s < n$, there holds
\begin{align*}
\mathcal{L}^n\left(\left\{x \in \mathbb{R}^n: \ \mathbf{M}_{\alpha}h(x)> \lambda\right\}\right) \le C \left(\frac{1}{\lambda^{s}}\int_{\mathbb{R}^n}|h(y)|^s dy\right)^{\frac{n}{n-\alpha s}},
\end{align*}
for any $\lambda >0$ and $h \in L^s(\mathbb{R}^n)$.
\end{lemma}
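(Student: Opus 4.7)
The plan is to establish a pointwise interpolation inequality of the form
$$\mathbf{M}_\alpha h(y) \;\le\; C\,\|h\|_{L^s(\mathbb{R}^n)}^{\alpha s/n}\,\bigl(\mathbf{M} h(y)\bigr)^{1-\alpha s/n},$$
and then reduce the claimed weak-type bound to the classical weak-type estimate for the Hardy--Littlewood maximal operator $\mathbf{M}$.

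To derive this interpolation, I would split the supremum defining $\mathbf{M}_\alpha h(y)$ at an auxiliary radius $R>0$. For $\varrho \le R$ the average is controlled by $\mathbf{M}h(y)$, so $\varrho^\alpha\fint_{B_\varrho(y)}|h|\,dx \le R^\alpha\,\mathbf{M}h(y)$. For $\varrho > R$, Hölder's inequality with exponents $s$ and $s'$ yields $\fint_{B_\varrho(y)}|h|\,dx \le |B_\varrho|^{-1/s}\|h\|_{L^s(\mathbb{R}^n)}$; because $\alpha s < n$ the prefactor $\varrho^{\alpha-n/s}$ is decreasing in $\varrho$, so this contribution is bounded by $C\,R^{\alpha - n/s}\|h\|_{L^s}$. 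Adding the two regimes and choosing $R$ to equalize them, namely $R^{n/s} = \|h\|_{L^s}/\mathbf{M}h(y)$, produces the displayed pointwise inequality (with the convention that the right-hand side is interpreted as $0$ when $\mathbf{M}h(y)=0$ and as $+\infty$ when $h\notin L^s$).

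With the interpolation in hand, the level set $\{\mathbf{M}_\alpha h > \lambda\}$ is contained in $\{\mathbf{M} h > \mu\}$ with $\mu := \bigl(\lambda/(C\|h\|_{L^s}^{\alpha s/n})\bigr)^{n/(n-\alpha s)}$. I would then invoke the weak-type $(s,s)$ bound $\mathcal{L}^n\{\mathbf{M}h>\mu\} \le C\mu^{-s}\|h\|_{L^s}^s$, which itself follows from the pointwise inequality $\mathbf{M}h \le (\mathbf{M}(|h|^s))^{1/s}$ combined with the classical weak-type $(1,1)$ estimate applied to $|h|^s \in L^1$. Inserting $\mu$ and collecting the exponents, the identity $\alpha s/n+(n-\alpha s)/n=1$ makes the powers of $\|h\|_{L^s}$ and $\lambda$ collapse cleanly to $sn/(n-\alpha s)$, recovering exactly $C\bigl(\lambda^{-s}\int|h|^s\bigr)^{n/(n-\alpha s)}$.

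The main conceptual step is really just the two-regime splitting and the optimization of $R$; this is the crux because it converts the fractional maximal function into a product of a norm and the ordinary maximal function, after which everything is standard. The only edge cases are the endpoint $\alpha=0$, where the statement is immediate from the weak-type $(s,s)$ bound, and the degenerate situation where $\mathbf{M}h(y)=0$ (giving $h=0$ a.e.\ and both sides zero); neither creates any genuine difficulty.
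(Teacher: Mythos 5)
Your proof is correct, and it is the standard argument. Note that the paper itself does not prove Lemma~\ref{lem:M_alpha}; it only cites \cite{PNJDE,MPTNsub,PNnonuniform} for a detailed proof, so there is no in-text proof to compare against. Your two-regime splitting of the supremum at a radius $R$, followed by optimization in $R$, is precisely the classical Hedberg-type interpolation argument that yields the pointwise bound $\mathbf{M}_\alpha h(y)\le C\,\|h\|_{L^s}^{\alpha s/n}(\mathbf{M}h(y))^{1-\alpha s/n}$ and, combined with the weak-type $(s,s)$ estimate for the Hardy--Littlewood maximal operator, gives exactly the stated weak-type $\bigl(s,\tfrac{ns}{n-\alpha s}\bigr)$ bound. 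The exponent bookkeeping in the final step checks out, and the edge cases ($\alpha=0$; $\mathbf{M}h(y)=0$ forcing $h\equiv 0$) are handled appropriately, so the argument is complete.
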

Here we write $\mathcal{L}^n(K)$ for the Lebesgue measure in $\mathbb{R}^n$ of a measurable set $K \subset \mathbb{R}^n$. 

In the present article, we investigate the regularity estimates in the setting of Lorentz space $L^{q,s}(\Omega)$. We now recall the definition of Lorentz spaces - the real interpolation spaces between Lebesgue spaces.  For $0<q<\infty$ and $1<s \le \infty$, the Lorentz spaces $L^{q,s}(\Omega)$ is the set containing all of functions $h \in L^1_{\mathrm{loc}}(\Omega)$ such that 
\begin{align}\label{Lor-norm}
\|h\|_{L^{q,s}(\Omega)} := \left[ q \int_0^\infty \lambda^{s-1} \mathcal{L}^n(\{\xi \in \Omega: |h(\xi)|>\lambda\} )^{\frac{s}{q}} d\lambda \right]^{\frac{1}{s}} < \infty, 
\end{align}
if $0< q < \infty$ and $0 < s < \infty$ and
\begin{align}\label{Lor-norm-inf}
\|h\|_{L^{q,\infty}(\Omega)} :=  \sup_{\lambda>0}{\lambda \mathcal{L}^n(\{\xi \in \Omega:|h(\xi)|>\lambda\})^{\frac{1}{q}}} < \infty,
\end{align}
if $s = \infty$. For this case, the space $L^{q,\infty}$ is the usual weak-$L^q$ or Marcinkiewicz space.

For simplicity of notation in our statements throughout the paper, we will use a new mapping $\Psi_{\sigma}: \ W^{1,p}(\Omega) \to \mathbb{R}^+$ defined by
\begin{align}\label{def:Psi}
\Psi_{\sigma}(\omega) & = \sigma^p + |\nabla \omega|^p + \mathbb{V}|\omega|^p, \quad \omega \in W^{1,p}(\Omega).
\end{align}

Related to the regularity results for degenerate case when $\sigma=0$, we simply write $\Psi$ instead of $\Psi_0$.  Here, almost the statements of our work require the same assumptions that $\Omega$ is $(\delta,r_0)$-Reifenberg in~\eqref{hyp:R} and $\mathbb{A}$ satisfies the $(\delta,r_0)$-BMO condition~\eqref{cond:BMO} for $\delta>0$ small enough. For reasons of brevity and deliberately not repeated, we only use the notation $(\mathcal{H}_{\delta})$ for both assumptions meanwhile they satisfy for some $r_0>0$. 

Let us now state our main results in this paper.

\begin{theorem}\label{theo:main-A}
For every $0 \le \alpha < \frac{n}{\theta}$ and $a > \frac{1}{\theta} - \frac{\alpha}{n} $, one can choose constants $\delta = \delta(\theta,\Upsilon,n,p)>0$, $b = b(a,\theta,\alpha,n,p) >0$ and $\varepsilon_0 = \varepsilon_0(a,b,n) \in (0,1)$ such that under hypothesis $(\mathcal{H}_{\delta})$ the following estimate
\begin{align}\label{est:A}
d_{\mathbf{M}_{\alpha}(\Psi_{\sigma}(u))}(\varepsilon^{-a}\lambda) \le C\varepsilon d_{\mathbf{M}_{\alpha}(\Psi_{\sigma}(u))}(\lambda) + d_{\mathbf{M}_{\alpha}(|\mathbf{f}|^p + \Psi_{\sigma}(\mathsf{g}))}(\varepsilon^{b}\lambda),
\end{align}
is valid for $\lambda > 0$ and $0< \varepsilon <\varepsilon_0$. Here, the distribution function $d_f$ is defined by the Lebesgue measure of a certain set
\begin{align*}
d_f(\lambda) = \mathcal{L}^n \left(\left\{x \in \Omega: \ |f(x)|> \lambda\right\}\right), \quad \lambda \ge 0,
\end{align*} 
that will be clarified in Section \ref{sec:level} later, and $C = C(\alpha,a,\theta,\Upsilon,n,p,\mathrm{diam}(\Omega)/r_0) > 0.$  
\end{theorem}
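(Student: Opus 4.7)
The plan is to establish the claim as a good-$\lambda$ type inequality by applying a Vitali-type covering argument to the super-level set
\begin{align*}
E_\lambda := \left\{ x \in \Omega : \mathbf{M}_\alpha(\Psi_\sigma(u))(x) > \varepsilon^{-a}\lambda \right\},
\end{align*}
combined with a local density estimate on each selected ball. The rough strategy is standard in Calder\'on--Zygmund theory: on a Vitali ball $B_{r_i}(x_i)$ one either already has $\mathbf{M}_\alpha(|\mathbf{f}|^p + \Psi_\sigma(\mathsf{g})) > \varepsilon^{b}\lambda$ somewhere (in which case the contribution is absorbed into the second term of \eqref{est:A}), or else one can extract an $\varepsilon$-small density factor by comparing $u$ with a reference solution to a simpler problem.

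First, I would fix $\lambda>0$ and work with the ``exceptional'' portion $E_\lambda^\star := E_\lambda \setminus \{\mathbf{M}_\alpha(|\mathbf{f}|^p + \Psi_\sigma(\mathsf{g})) > \varepsilon^b \lambda\}$; the complementary portion is controlled directly by the second term on the right of \eqref{est:A}. For each $x \in E_\lambda^\star$, a stopping-time/continuity argument on the radius, together with Lemma~\ref{lem:M_alpha} and the fact that $\mathbf{M}_\alpha(\Psi_\sigma(u))$ must eventually drop below $\lambda$ at large scales (since $\lambda > 0$ and $0 \le \alpha < n/\theta$ guarantees finite-measure bounds on $\{\mathbf{M}_\alpha > \lambda\}$), produces a maximal radius $r_x$ with $r_x^\alpha \fint_{B_{r_x}(x)} \Psi_\sigma(u)\,dy \approx \varepsilon^{-a}\lambda$. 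Applying the $5r$-Vitali covering lemma to $\{B_{r_x/5}(x)\}_{x \in E_\lambda^\star}$ yields countably many disjoint balls whose enlargements $B_i := B_{r_i}(x_i)$ cover $E_\lambda^\star$ and satisfy, by maximality, both
\begin{align*}
r_i^\alpha \fint_{B_i} \Psi_\sigma(u)\, dy \; \leq \; \varepsilon^{-a} \lambda \quad \text{and} \quad B_i \cap \left\{ \mathbf{M}_\alpha(|\mathbf{f}|^p+\Psi_\sigma(\mathsf{g})) \le \varepsilon^{b}\lambda \right\} \neq \emptyset.
\end{align*}

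Second, on each $B_i$ I need the local density bound
\begin{align*}
\mathcal{L}^n\bigl( B_i \cap \{ \mathbf{M}_\alpha(\Psi_\sigma(u)) > \varepsilon^{-a}\lambda \} \bigr) \le C\varepsilon\, \mathcal{L}^n(B_i),
\end{align*}
which I would obtain by freezing the nonlinearity and solving the homogeneous reference equation $-\mathrm{div}\,\overline{\mathbb{A}}_{B_i}(\nabla v) = 0$ on a slightly enlarged ball (with the correct boundary treatment: if $B_i$ meets $\partial\Omega$ one uses the Reifenberg flatness \eqref{hyp:R} to flatten the boundary and solve a half-space homogeneous Dirichlet problem). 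The smallness of $\delta$ in the $(\delta,r_0)$-BMO condition \eqref{cond:BMO} and in \eqref{hyp:R} makes the comparison error $\fint_{B_i} |\nabla u - \nabla v|^p$ arbitrarily small; the Schr\"odinger contribution is handled by the reverse H\"older hypothesis $\mathbb{V} \in \mathbb{RH}^\theta$ and the Morrey bound \eqref{eq:Morrey_V}, which lets me absorb $\fint_{B_i} \mathbb{V}|u|^p$ into $\fint_{B_i} \Psi_\sigma(u)$ plus an error of order $\varepsilon^{b}\lambda$. The reference solution $v$ enjoys a Lipschitz-in-gradient bound on a smaller concentric ball, and Lemma~\ref{lem:M_alpha} (applied at the scale $\alpha$) transforms this into the claimed $\varepsilon$-density estimate, provided the exponent $b$ is chosen large enough to dominate the interplay between $a$, $\alpha$, and the reverse-H\"older exponent $\theta$.

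Summing the local estimates over the disjoint Vitali family $\{B_i/5\}$ and using the obvious inclusion $B_i \subset \{\mathbf{M}_\alpha(\Psi_\sigma(u)) > \lambda\}$ produced by the stopping construction, one arrives at $\mathcal{L}^n(E_\lambda^\star) \le C\varepsilon\, d_{\mathbf{M}_\alpha(\Psi_\sigma(u))}(\lambda)$, which together with the trivial bound on $E_\lambda \setminus E_\lambda^\star$ gives \eqref{est:A}. The principal obstacle, I expect, is the \emph{precise} scaling exponent $a > 1/\theta - \alpha/n$: this threshold is exactly what is needed to reconcile the self-improvement in Lemma~\ref{lem:M_alpha} at fractional level $\alpha$ with the loss of integrability coming from $\mathbb{V} \in \mathbb{RH}^\theta$, and matching these two scalings without slack forces a careful bookkeeping of all the Sobolev/H\"older exponents in the comparison estimates. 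A secondary, largely technical, obstacle is the Reifenberg-boundary version of the Lipschitz estimate for the reference solution $v$, which must be proved with a $\delta$-error that vanishes independently of $\lambda$ and $\varepsilon$; this is what ultimately dictates how small $\delta$ must be taken in terms of $\theta,\Upsilon,n,p$.
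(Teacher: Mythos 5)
Your covering strategy — stopping-time selection followed by a $5r$-Vitali lemma, contrasted with the paper's use of the Caffarelli--Cabr\'e Calder\'on--Zygmund--Krylov--Safonov decomposition (Lemma~\ref{lem:Cover}) — is a legitimate alternative, and at that level the two arguments are essentially interchangeable. However, there is a genuine gap in how you set up the reference problem, and it is fatal to the proposal as written.

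You propose to compare $u$ with a solution $v$ of the \emph{frozen, $\mathbb{V}$-free} equation $-\mathrm{div}\,\overline{\mathbb{A}}_{B_i}(\nabla v)=0$, in order to obtain a Lipschitz-in-gradient bound for $v$ and then run Lemma~\ref{lem:M_alpha}. But the quantity you need to control at the density level is $\mathbf{M}_\alpha(\Psi_\sigma(u))$ with $\Psi_\sigma(u)=\sigma^p+|\nabla u|^p+\mathbb{V}|u|^p$. If $\mathbb{V}$ is dropped from the reference equation, then after the decomposition $\Psi_\sigma(u)\lesssim(\text{comparison error})+(\sigma^p+|\nabla v|^p)+\mathbb{V}|u|^p$ the last piece $\mathbb{V}|u|^p$ has no counterpart in $v$, and there is no way to convert the smallness of $\fint_{B_i}\mathbb{V}|u|^p\,dx$ (which is a scalar average) into the required \emph{measure} estimate for $\{\mathbf{M}_\alpha(\Psi_\sigma(u))>\varepsilon^{-a}\lambda\}\cap B_i$. ``Absorbing $\fint_{B_i}\mathbb{V}|u|^p$ into $\fint_{B_i}\Psi_\sigma(u)$'' is vacuous, since $\mathbb{V}|u|^p$ is a summand of $\Psi_\sigma(u)$, and integral smallness does not bound the superlevel set of a maximal function at a height $\varepsilon^{-a}$ times larger. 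This is exactly why the paper's reference problem in Lemma~\ref{lem:Comp} \emph{retains} the Schr\"odinger term and is \emph{not} frozen: the reverse H\"older estimates of Lemmas~\ref{lem:RH} and~\ref{lem:RH-boundary} then apply to the \emph{full} quantity $\Psi_\sigma(v)$ (including $\mathbb{V}|v|^p$), so that the splitting $\Psi_\sigma(u)\le c_p\big(\Psi_\sigma(u-v)+\Psi_\sigma(v)\big)$ in Lemma~\ref{lem:A3} produces two terms, each of which is amenable to Lemma~\ref{lem:M_alpha}: the comparison error at $s=1$ and $\Psi_\sigma(v)$ at $s=\theta$.

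Two secondary points reinforce the same issue. First, the threshold $a>\frac1\theta-\frac\alpha n$ comes \emph{precisely} from applying Lemma~\ref{lem:M_alpha} with $s=\theta$ to the reverse-H\"older-improved $\Psi_\sigma(v)$, which produces the exponent $\frac{n\theta}{n-\alpha\theta}$ in~\eqref{est:4.4.4}--\eqref{est:4.4.5}; a genuinely Lipschitz reference solution would give a strictly weaker constraint on $a$, independent of $\theta$. The fact that the paper's constant $q$-range and your conjectured threshold both carry $\theta$ is a direct signal that the reference solution cannot be better than $L^\theta$, and this is because $\mathbb{V}$ stays inside the reference problem. Second, near a Reifenberg boundary a Lipschitz bound for $\nabla v$ simply does not hold — Reifenberg flatness gives only higher-integrability/reverse-H\"older gains (which is what Lemma~\ref{lem:RH-boundary} provides), not $C^{0,1}$ regularity — so the boundary part of your density estimate would fail even for the $\mathbb{V}$-free model.
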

\begin{theorem}\label{theo:main-B}
For any $0 \le \alpha < \frac{n}{\theta}$, one can find $\delta = \delta(\theta,\Upsilon,n,p)>0$ such that under hypothesis $(\mathcal{H}_{\delta})$ then
\begin{align}\label{est:B}
\| \mathbf{M}_{\alpha}(\Psi_{\sigma}(u))\|_{L^{q,s}(\Omega)} \le C \|\mathbf{M}_{\alpha}(|\mathbf{f}|^p + \Psi_{\sigma}(\mathsf{g})) \|_{L^{q,s}(\Omega)},
\end{align}
holds for any $0<q<\frac{n\theta}{n-\alpha\theta}$ and $0<s \le \infty$, where $C = C(\alpha,q,s,\Upsilon,n,p,\mathrm{diam}(\Omega)/r_0)>0$.
\end{theorem}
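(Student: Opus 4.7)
The plan is to deduce the Lorentz bound \eqref{est:B} from the level-set inequality \eqref{est:A} of Theorem~\ref{theo:main-A} by a standard distribution function argument. First I would fix $q \in (0, \tfrac{n\theta}{n-\alpha\theta})$ and choose a parameter $a$ satisfying
\begin{equation*}
\tfrac{1}{\theta} - \tfrac{\alpha}{n} < a < \tfrac{1}{q},
\end{equation*}
which is possible exactly because of the range assumed on $q$; the compatibility of the two inequalities for $a$ is the structural reason why Theorem~\ref{theo:main-B} holds precisely in this regime. With such $a$ in hand, Theorem~\ref{theo:main-A} delivers constants $\delta$, $b$, and $\varepsilon_0$, together with a constant $C$ such that the level-set inequality \eqref{est:A} is valid uniformly in $\lambda>0$ and $\varepsilon \in (0,\varepsilon_0)$.

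Next I would treat the case $0<s<\infty$. Writing $f = \mathbf{M}_\alpha(\Psi_\sigma(u))$ and $g = \mathbf{M}_\alpha(|\mathbf{f}|^p + \Psi_\sigma(\mathsf{g}))$, I would start from
\begin{equation*}
\|f\|_{L^{q,s}(\Omega)}^s = q\int_0^\infty \lambda^{s-1}\, d_f(\lambda)^{s/q}\, d\lambda,
\end{equation*}
perform the substitution $\lambda = \varepsilon^{-a}\mu$ to produce a factor $\varepsilon^{-as}$, insert \eqref{est:A}, and use the elementary inequality $(x+y)^{s/q} \le C_{s/q}(x^{s/q}+y^{s/q})$ to split the integrand. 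The $f$-piece reproduces $\|f\|_{L^{q,s}}^s$ with a coefficient proportional to $\varepsilon^{-as}\varepsilon^{s/q}$, while a second change of variable $\nu = \varepsilon^{b}\mu$ turns the $g$-piece into $\varepsilon^{-(a+b)s}\|g\|_{L^{q,s}}^s$. The resulting inequality reads
\begin{equation*}
\|f\|_{L^{q,s}(\Omega)}^s \le C_1 \varepsilon^{s(1/q - a)} \|f\|_{L^{q,s}(\Omega)}^s + C_2 \varepsilon^{-(a+b)s}\|g\|_{L^{q,s}(\Omega)}^s .
\end{equation*}
Since $1/q - a > 0$, I pick $\varepsilon \in (0,\varepsilon_0)$ so small that $C_1\varepsilon^{s(1/q - a)} \le \tfrac{1}{2}$, absorb the first summand into the left-hand side, and obtain \eqref{est:B} with $C \simeq \varepsilon^{-(a+b)}$. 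The case $s=\infty$ follows by the analogous, simpler manipulation directly on \eqref{Lor-norm-inf}: replace $\lambda$ by $\varepsilon^{-a}\lambda$ in the definition and insert \eqref{est:A} once.

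The main obstacle, and the step I would treat most carefully, is the a priori finiteness of $\|f\|_{L^{q,s}(\Omega)}$, which must be ensured before the absorption step makes sense. A natural remedy is to apply the argument to the truncated quantity $f_M := \min(f, M)$: \eqref{est:A} propagates to $d_{f_M}$ because $\{f_M > \lambda\} \subset \{f > \lambda\}$, and $\|f_M\|_{L^{q,s}(\Omega)}$ is finite since $f_M \le M$ and $\Omega$ is bounded. Running the distribution-function computation above for $f_M$ yields a bound independent of $M$, and the monotone convergence theorem then gives \eqref{est:B}. Alternatively one may invoke Lemma~\ref{lem:M_alpha} together with Lee--Ok's $L^q$-type estimate from~\cite{LO_schro} to secure the finiteness a priori. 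Either route completes the proof without further analytic subtleties, since both remaining ingredients, the doubling behaviour under dilation of $\lambda$ and the $(x+y)^{s/q}$ splitting, are purely measure-theoretic.
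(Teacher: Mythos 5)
Your proof is correct and follows essentially the same route as the paper: fix $a \in (\tfrac{1}{\theta}-\tfrac{\alpha}{n}, \tfrac{1}{q})$, rescale $\lambda \mapsto \varepsilon^{-a}\lambda$ in the Lorentz quasi-norm, insert the level-set inequality \eqref{est:A} from Theorem~\ref{theo:main-A}, split via $(x+y)^{s/q}\le C(x^{s/q}+y^{s/q})$, and absorb by choosing $\varepsilon$ small using $\tfrac{1}{q}-a>0$. You additionally justify the a priori finiteness of $\|\mathbf{M}_\alpha(\Psi_\sigma(u))\|_{L^{q,s}(\Omega)}$ needed for the absorption step via the truncation $f_M=\min(f,M)$, a legitimate technical point the paper's proof passes over in silence, and your verification that \eqref{est:A} propagates to $d_{f_M}$ is sound.
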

Thanks to the boundedness property of maximal operator on the Lorentz spaces, one easily obtain the following straightforward corollary.
\begin{corollary}\label{coro}
There exists $\delta = \delta(\theta,\Upsilon,n,p)>0$ small enough such that under hypothesis $(\mathcal{H}_{\delta})$ then
\begin{align}\label{est:BC}
\| \sigma + |\nabla u|\|_{L^{q,s}(\Omega)} \le C \| \sigma + |\mathbf{f}| + |\nabla \mathsf{g}| + \mathbb{V}^{\frac{1}{p}} |\mathsf{g}| \|_{L^{q,s}(\Omega)},
\end{align}
for any $0<q<p\theta$ and $0<s \le \infty$.
\end{corollary}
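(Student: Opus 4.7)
The plan is to deduce Corollary \ref{coro} from Theorem \ref{theo:main-B} specialized to $\alpha = 0$, by removing the Hardy--Littlewood maximal operator from both sides of \eqref{est:B} and then rescaling the Lorentz exponents by a factor of $p$. Setting $\alpha = 0$ in \eqref{est:B} gives
\begin{equation*}
\| \mathbf{M}(\Psi_{\sigma}(u))\|_{L^{q,s}(\Omega)} \le C \|\mathbf{M}(|\mathbf{f}|^p + \Psi_{\sigma}(\mathsf{g})) \|_{L^{q,s}(\Omega)}
\end{equation*}
for every $0 < q < \theta$ and $0 < s \le \infty$. On the left, the Lebesgue differentiation theorem yields $\mathbf{M}h \ge h$ a.e.\ for nonnegative $h$, so the left-hand side already dominates $\|\Psi_{\sigma}(u)\|_{L^{q,s}(\Omega)}$. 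On the right, the strong-type boundedness of the Hardy--Littlewood maximal operator on Lorentz spaces, $\|\mathbf{M}h\|_{L^{q,s}} \le C\|h\|_{L^{q,s}}$ (valid for $q>1$ and all $s$), reduces the right-hand side to $\||\mathbf{f}|^p + \Psi_{\sigma}(\mathsf{g})\|_{L^{q,s}(\Omega)}$.

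Next I would convert the resulting $\Psi_{\sigma}$-estimate into the gradient estimate \eqref{est:BC} by combining the elementary pointwise bounds $(\sigma + |\nabla u|)^p \le C\,\Psi_{\sigma}(u)$ and $|\mathbf{f}|^p + \Psi_{\sigma}(\mathsf{g}) \le C(\sigma + |\mathbf{f}| + |\nabla \mathsf{g}| + \mathbb{V}^{1/p}|\mathsf{g}|)^p$ with the $p$-power rescaling identity
\begin{equation*}
\|F\|_{L^{Q,S}(\Omega)}^{p} = \|F^p\|_{L^{Q/p,\,S/p}(\Omega)},
\end{equation*}
derived from \eqref{Lor-norm}--\eqref{Lor-norm-inf} by the substitution $\mu = \lambda^p$ in the defining integral. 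Writing $Q = pq$ and $S = ps$, the range $0 < q < \theta$ of the previous step matches exactly the claimed range $0 < Q < p\theta$, and chaining the above inequalities delivers \eqref{est:BC}.

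The main delicate point is the constraint $q > 1$ in the Lorentz boundedness of $\mathbf{M}$, which under the $p$-rescaling only covers $p < Q < p\theta$. To reach the lower range $0 < Q \le p$, the natural remedy is to invoke Theorem \ref{theo:main-B} with a suitably chosen $\alpha \in (0, n/\theta)$ and to combine \eqref{est:B} with the Hardy--Littlewood--Sobolev-type mapping $\mathbf{M}_\alpha : L^{q_1, s} \to L^{q_2, s}$, $1/q_2 = 1/q_1 - \alpha/n$, a Lorentz-refinement of Lemma \ref{lem:M_alpha}; varying $\alpha$ then pushes the lower endpoint for $Q$ arbitrarily close to zero while the upper endpoint $p\theta$ is preserved by the rescaling identity.
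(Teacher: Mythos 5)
Your main computation---setting $\alpha=0$ in \eqref{est:B}, using $\mathbf{M}h\ge h$ a.e.\ on the left, the Lorentz boundedness $\|\mathbf{M}h\|_{L^{q,s}}\le C\|h\|_{L^{q,s}}$ on the right, and then the power--rescaling identity $\|F\|_{L^{Q,S}}^{p}=\|F^p\|_{L^{Q/p,S/p}}$ together with the pointwise comparisons between $\Psi_\sigma$ and the raw data---is exactly the route the paper invokes (it states only that the corollary follows ``thanks to the boundedness property of maximal operator on the Lorentz spaces''), and your bookkeeping for this step is correct. You have also put your finger on a genuine delicacy: the strong boundedness $\mathbf{M}\colon L^{q,s}\to L^{q,s}$ holds only for $q>1$ (it fails for $q\le 1$ even over a bounded domain, as $\chi_{B_\varepsilon}/\varepsilon^n$ shows), so after the $p$--rescaling this argument only delivers \eqref{est:BC} in the range $p<q<p\theta$, not the full range $0<q<p\theta$ that the statement claims.

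Your proposed repair for the low range, however, does not close the gap. With $\alpha>0$ the pointwise lower bound $\mathbf{M}_\alpha h\ge h$ fails: for a.e.\ Lebesgue point $y$ one has $\lim_{\rho\to 0^+}\rho^\alpha\fint_{B_\rho(y)}|h|\,dz=0$, so $\mathbf{M}_\alpha h(y)$ need not dominate $|h(y)|$. Consequently you lose the step $\|\Psi_\sigma(u)\|_{L^{q_1,s}}\le \|\mathbf{M}_\alpha(\Psi_\sigma(u))\|_{L^{q,s}}$ on the left-hand side of \eqref{est:B}. The Hardy--Littlewood--Sobolev mapping $\mathbf{M}_\alpha\colon L^{q_1,s}\to L^{q_2,s}$, $1/q_2=1/q_1-\alpha/n$, is an \emph{upper} bound for $\mathbf{M}_\alpha$ and is indeed useful on the right-hand side of \eqref{est:B}; but what you need on the left is the reverse direction, $\|h\|_{L^{q_1,s}}\le C\|\mathbf{M}_\alpha h\|_{L^{q,s}}$ with $q_1<q$, and no such ``reverse HLS'' inequality holds. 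So choosing $\alpha>0$ only improves the exponent on the right and leaves a strictly larger Lorentz exponent stranded on the left; the two sides no longer live in the same space and the chain does not close. The gap for $0<q\le p$ therefore remains; either the stated range of Corollary \ref{coro} should be read as $p<q<p\theta$, or a different mechanism---not visible in the paper, which gives no proof---would be needed to reach the small exponents.
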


Let us review some important technical tools in Calder\'on-Zygmund and regularity estimates for nonlinear problems that developed over the last years.  By deeply using the interaction between Harmonic analysis and nonlinear partial differential equations, a number of intensive studies have been investigated by Iwaniec \cite{Iwaniec}, Caffarelli and Peral \cite{CP1998}, Dong and Kim \cite{DK2011}, Krylov \cite{Krylov} and so on. In the past few years, there have been strong efforts to obtain the global gradient estimates or Calder\'on-Zygmund type estimates over the nonsmooth domains and the geometrical approach by Byun and Wang was very impressive via number of results \cite{BW2,SSB4}. Another extremely important technique with no use of Harmonic Analysis allows to prove higher integrability or Calder\'on-Zygmund results has been introduced by Acerbi and Mingione in \cite{AM2007}. Nowadays, such amazing technique becomes a standard standard tools in yielding locally regularity results for a large class of nonlinear problems. Besides, a number of authors have continued, combined and developed gradient/Calder\'on-Zygmund estimates to various nonlinear elliptic and parabolic problems over non-smooth domains. For instance, \cite{55QH4, Mi3,MPT2018,PNCRM,PNJDE,MPTNsub,Duzamin2,KM2012, KM2014} and needless to say, the list of references is incomplete.

Throughout this paper, we employ the technique in \cite{AM2007, Mi3} in a different point of view, first presented in \cite{PN_dist}. An interesting feature of our approach is the appearance of \emph{fractional maximal distribution functions} (FMD) and the constructions of level-set inequalities in proofs. Apart from its own interest, our approach is useful for understanding the essence behind the proofs of Calder\'on-Zygmund-type estimates and further on, it enables us to make the use of this technique to obtain other regularity results, especially in terms of fractional maximal functions $\mathbf{M}_\alpha$ (see \cite{PN14}).

The remainder of this paper is organized as follows. In Section \ref{sec:inter_bound}, a few preliminary results, in which local comparison estimates will be stated and proved. To handle the difficulties of proof, we revisit the approach of level-set inequalities on FMDs and the next section is concerned with establishing those types of level-set inequalities. The approach of working with level-set inequalities on FMDs was first proposed in our previous work \cite{PN_dist} and becomes an effective tool in proving the global regularity results via fractional maximal operators. And finally, Section \ref{sec:proofs} is devoted to proving our main results of this paper.

\section{Comparison estimates}\label{sec:inter_bound}

We first recall the classical global estimate for gradient of solutions to~\eqref{eq:diveq} in Lebesgue space $L^p(\Omega)$. Following the idea earlier introduced by Mingione in \cite{Mi3}, we next establish the local estimates related to the weak solutions to the corresponding homogeneous problem. 

In what follows, we always consider $u$ as a weak solution to~\eqref{eq:diveq} under $(p,\sigma)$-monotone conditions of $\mathbb{A}$ for $1< p < n$ and $0 \le \sigma \le 1$, with given data $\mathbf{f} \in L^p(\Omega; \mathbb{R}^n)$ and boundary condition $\mathsf{g} \in W^{1,p}(\Omega)$. We recall that $u \in W^{1,p}(\Omega)$ is a weak (distribution) solution to~\eqref{eq:diveq} if it satisfies the following variational formula
\begin{align}\label{var-form}
\int_\Omega{\mathbb{A}(x,\nabla u) \cdot \nabla\varphi  dx} + \int_{\Omega} \mathbb{V} |u|^{p-2}u \varphi dx = \int_\Omega{ |\mathbf{f}|^{p-2} \mathbf{f} \cdot \nabla\varphi  dx},
\end{align}
for any $\varphi \in W_0^{1,p}(\Omega)$.

\subsection{A global Lebesgue estimate}
\begin{lemma}\label{lem:Global}
One can find a constant $C(\Upsilon,\sigma,n,p)>0$ such that
\begin{equation}\label{est:Global}
\int_{\Omega} \Psi_{\sigma}(u) dx \le C(\Upsilon,\sigma,n,p) \int_{\Omega}\left(|\mathbf{f}|^p + \Psi_{\sigma}(\mathsf{g})\right)dx.
\end{equation}
\end{lemma}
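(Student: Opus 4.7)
The plan is to test the weak formulation \eqref{var-form} against the admissible function $\varphi = u - \mathsf{g} \in W_0^{1,p}(\Omega)$, split the resulting identity into an elliptic (coercive) part on the left and a collection of cross-terms on the right, and then absorb every cross-term into the left via Young's inequality at a sufficiently small parameter.

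After substituting $\varphi = u-\mathsf{g}$, I would reorganize the identity as
\begin{equation*}
\int_\Omega \mathbb{A}(x,\nabla u)\cdot\nabla u\, dx + \int_\Omega \mathbb{V}|u|^p\, dx
\;=\; \int_\Omega \mathbb{A}(x,\nabla u)\cdot\nabla \mathsf{g}\, dx + \int_\Omega \mathbb{V}|u|^{p-2}u\,\mathsf{g}\, dx + \int_\Omega |\mathbf{f}|^{p-2}\mathbf{f}\cdot\nabla(u-\mathsf{g})\, dx.
\end{equation*}
For the left-hand side, the monotonicity assumption \eqref{eq:A2} applied with $\zeta_1 = \nabla u$ and $\zeta_2 = 0$, combined with the pointwise bound $|\mathbb{A}(x,0)| \le \Upsilon\sigma^{p-1}$ coming from \eqref{eq:A1}, yields
\begin{equation*}
\mathbb{A}(x,\nabla u)\cdot\nabla u \;\ge\; \Upsilon^{-1}\bigl(\sigma^2+|\nabla u|^2\bigr)^{\frac{p-2}{2}}|\nabla u|^2 - \Upsilon\sigma^{p-1}|\nabla u|.
\end{equation*}
An elementary case analysis in the two regimes $p\ge 2$ and $1<p<2$ (in the subquadratic case one splits according to whether $|\nabla u|\le \sigma$ or $|\nabla u|>\sigma$) produces the lower bound $(\sigma^2+|\nabla u|^2)^{(p-2)/2}|\nabla u|^2 \ge c(p)|\nabla u|^p - C(p)\sigma^p$, after which the Young inequality absorbs $\sigma^{p-1}|\nabla u|$. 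This furnishes the coercive estimate $\int_\Omega \mathbb{A}(x,\nabla u)\cdot\nabla u\,dx \ge c\int_\Omega |\nabla u|^p\,dx - C\sigma^p|\Omega|$.

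For the right-hand side I would estimate the three terms independently. The growth bound \eqref{eq:A1} gives $|\mathbb{A}(x,\nabla u)|\le \Upsilon(\sigma^2+|\nabla u|^2)^{(p-1)/2}$, and Young's inequality with a small $\varepsilon>0$ produces
\begin{equation*}
\int_\Omega \mathbb{A}(x,\nabla u)\cdot \nabla \mathsf{g}\, dx \;\le\; \varepsilon \int_\Omega |\nabla u|^p\, dx + C(\varepsilon,\Upsilon,p)\int_\Omega \bigl(|\nabla \mathsf{g}|^p + \sigma^p\bigr)\, dx.
\end{equation*}
For the Schrödinger cross-term I split the weight as $\mathbb{V} = \mathbb{V}^{(p-1)/p}\cdot \mathbb{V}^{1/p}$ and apply Young's inequality to obtain $\int_\Omega \mathbb{V}|u|^{p-2}u\,\mathsf{g}\,dx \le \varepsilon \int_\Omega \mathbb{V}|u|^p\,dx + C(\varepsilon)\int_\Omega \mathbb{V}|\mathsf{g}|^p\,dx$. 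The datum term is handled likewise: $\int_\Omega |\mathbf{f}|^{p-2}\mathbf{f}\cdot \nabla(u-\mathsf{g})\,dx \le \varepsilon\int_\Omega |\nabla u|^p\,dx + C(\varepsilon)\int_\Omega(|\mathbf{f}|^p + |\nabla \mathsf{g}|^p)\,dx$.

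Choosing $\varepsilon$ small enough to absorb the $\int_\Omega |\nabla u|^p$ and $\int_\Omega \mathbb{V}|u|^p$ contributions into the left-hand side, and then adding $\sigma^p|\Omega|\le \int_\Omega \sigma^p\,dx$ (which is dominated by $\int_\Omega \Psi_\sigma(\mathsf{g})\,dx$) to complete the $\Psi_\sigma(u)$ combination on the left, yields \eqref{est:Global}. The only slightly delicate point is the lower bound on $\mathbb{A}(x,\nabla u)\cdot\nabla u$ in the subquadratic regime $1<p<2$, since there $(\sigma^2+|\nabla u|^2)^{(p-2)/2}$ is large where $|\nabla u|$ is small; this is what forces the loss of $\sigma^p$ on the right, but that loss is exactly what the term $\sigma^p$ inside $\Psi_\sigma(\mathsf{g})$ is there to absorb.
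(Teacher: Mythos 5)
Your proposal is correct and follows essentially the same route as the paper: test the weak formulation against $\varphi = u - \mathsf{g}$, invoke the structure conditions \eqref{eq:A1}--\eqref{eq:A2} (with $\zeta_2 = 0$) for coercivity, estimate each cross-term by Young's inequality, and absorb. The only minor difference is in how the subquadratic case $1<p<2$ is handled: you convert $(\sigma^2+|\nabla u|^2)^{(p-2)/2}|\nabla u|^2$ to $c(p)|\nabla u|^p - C(p)\sigma^p$ by a pointwise inequality before absorbing, whereas the paper keeps the quantity $J_{\sigma}(\nabla u)$ intact in its intermediate estimate and performs the conversion afterward via an integral-level Young step; both are valid and yield the same conclusion.
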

\begin{proof}
For simplicity, we now use the following notation
\begin{align*}
J_{\sigma}(\psi) = (\sigma^2 + |\psi|^2)^{\frac{p-2}{2}}|\psi|^2, \quad \psi \in \mathbb{R}^n, \ \sigma^2 + |\psi|^2 \neq 0.
\end{align*}
Let us choose $\varphi = u-\mathsf{g}$ in~\eqref{var-form} to find out
\begin{align*}
\int_{\Omega} \mathbb{A}(x, \nabla u) \cdot \nabla u dx + \int_{\Omega}\mathbb{V}|u|^pdx & = \int_{\Omega} \mathbb{A}(x, \nabla u) \cdot \nabla \mathsf{g} dx \\
& \ + \int_{\Omega}\mathbb{V}|u|^{p-2}u \mathsf{g} dx + \int_{\Omega}|\mathbf{f}|^{p-2}\mathbf{f} \cdot \nabla(u-\mathsf{g})dx.
\end{align*}
Thanks to \eqref{eq:A1} and \eqref{eq:A2}, this equality implies to
\begin{align*}
& \int_{\Omega} \left(J_{\sigma}(\nabla u) + \mathbb{V}|u|^p \right) dx  \leq C(\Upsilon)\left(\int_{\Omega}(\sigma^2 + |\nabla u|^2)^{\frac{p-1}{2}}|\nabla \textsf{g}|dx  + \int_{\Omega} \sigma^{p-1} |\nabla u|dx \right. \\
 & \hspace{3cm} \left.  + \int_{\Omega}\mathbb{V}^{1-\frac{1}{p}}|u|^{p-1} \mathbb{V}^{\frac{1}{p}}|\mathsf{g}| dx + \int_{\Omega}(|\nabla u|+ |\nabla \mathsf{g}|)|\mathbf{f}|^{p-1} dx\right).
\end{align*}
For every $\varepsilon_1 > 0$, one may apply H{\"o}lder and Young's inequalities to arrive 
\begin{align} \label{est:01}
\int_{\Omega} \left( J_{\sigma}(\nabla u) + \mathbb{V}|u|^p \right) dx   \leq \varepsilon_1 \int_{\Omega} \Psi_{\sigma}(u)dx +  C(\varepsilon_1)\int_{\Omega} (|\mathbf{f}|^p + \Psi_{\sigma}(\mathsf{g}))dx.
\end{align}
Remark that $|\nabla u|^p \le J_{\sigma}(\nabla u)$ when $p \ge 2$, it allows us to obtain~\eqref{est:Global} from~\eqref{est:01} by taking $\varepsilon_1 = \frac{1}{2}$ in this case. Otherwise when $p \in (1,2)$, we first present $|\nabla u|^p$ as follows
\begin{align*}
|\nabla u|^p & =   \left(J_{\sigma}(\nabla u)\right)^{\frac{p}{2}} (\sigma^2 + |\nabla u|^2)^{\frac{p}{2}\left(1-\frac{p}{2}\right)}  \le \left(2J_{\sigma}(\nabla u)\right)^{\frac{p}{2}} (\sigma^p + |\nabla u|^p)^{1-\frac{p}{2}},
\end{align*}
and we then apply Young inequality to get that
\begin{align}\label{est:0100}
\int_{\Omega} |\nabla u|^p dx &\le \varepsilon_2 \int_{\Omega} (\sigma^p + |\nabla u|^p) dx + 2\varepsilon_2^{1-\frac{2}{p}} \int_{\Omega} J_{\sigma}(\nabla u) dx,
\end{align}
for every $\varepsilon_2 > 0$. Combining between~\eqref{est:01} and~\eqref{est:0100}, one has
\begin{align*}
\int_{\Omega} \Psi_{\sigma}(u) dx &\le  \int_{\Omega} (\sigma^p + \varepsilon_2|\nabla u|^p) dx  + C\varepsilon_2^{1-\frac{2}{p}} \int_{\Omega} \left(J_{\sigma}(\nabla u) + \mathbb{V}|u|^p \right)dx \\ 
 &\le \left(\varepsilon_2 +  C\varepsilon_2^{1-\frac{2}{p}}\varepsilon_1\right)\int_{\Omega}  \Psi_{\sigma}(u) dx + C(\varepsilon_1, \varepsilon_2)\int_{\Omega} (|\mathbf{f}|^p +  \Psi_{\sigma}(\mathsf{g}) )dx.
\end{align*}
We may conclude~\eqref{est:Global} by choosing $\varepsilon_1 = \frac{1}{C}(\frac{1}{2} - \varepsilon_2)\varepsilon_2^{\frac{2}{p}-1}$ and $\varepsilon_2 = \frac{2-p}{2}$ in the last computation.
\end{proof}
\subsection{Local comparison estimates}
\label{sec:local}

In this subsection, we employ the homogeneous Dirichlet problems to construct and prove the comparison results between distribution solutions to the original problem \eqref{eq:diveq} and the unique solutions to suitable reference problems. Proofs of these comparison arguments to assert the gradient estimates of solutions to original problems were originally based on the ideas of Mingione \textit{et al.}, going back at least to \cite{Mi4, KM2014,Duzamin2}. And Lemma \ref{lem:Comp} provides an important result for the main proof of gradient estimates. To stress further this section, we derive both comparison estimates in the interior of our domain and up to the boundary. 

The auxiliary Lemmas \ref{lem:RH} and \ref{lem:RH-boundary} allow us to obtain the gradient estimates of solutions to corresponding homogeneous problems in the interior of domain and on the boundary, returning to the work by Lee and Ok in~\cite{LO_schro}.  We further remark here that although in~\cite{LO_schro}, authors only proved the reverse H\"older type inequality concerning the case $\sigma=0$, it allows us to treat the validity for the case $\sigma>0$ by applying the similar arguments. 

\begin{lemma} \label{lem:RH}
Let $x_0 \in \Omega$ and $R \in (0,r_0/2]$. Suppose that $v$ solves the following homogeneous problem
\begin{equation}\label{eq:I1-a}
-\mathrm{div}  \mathbb{A}(x,\nabla v)  + \mathbb{V} |v|^{p-2}v  = \ 0,  \quad \mbox{ in } B_{2R}(x_0).
\end{equation}
Assume moreover that $\mathbb{V} \in \mathbb{RH}^{\theta}$ for some $\theta \in [\frac{n}{p}, n)$ satisfying $\|\mathbb{V}\|_{L^{\theta;p\theta}}(\Omega) \le 1$. If $[\mathbb{A}]^{r_0} \le \delta$ for $\delta$ small enough then there holds
\begin{equation}\label{est:RH}
\left(\fint_{B_{R}(x_0)} (\Psi_{\sigma}(v))^{\theta} dx\right)^{\frac{1}{\theta}}\leq C\fint_{B_{2R}(x_0)} \Psi_{\sigma}(v) dx. 
\end{equation}
\end{lemma}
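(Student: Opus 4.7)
My plan is to follow the classical Gehring-type self-improvement paradigm, adapted to the Schr\"odinger setting in the spirit of Lee--Ok~\cite{LO_schro}. The first step is a Caccioppoli-type energy inequality. For a ball $B_\rho(y) \subset B_{2R}(x_0)$ with $\rho \le R$ and a standard cutoff $\eta \in C_c^\infty(B_\rho(y))$ with $\eta \equiv 1$ on $B_{\rho/2}(y)$ and $|\nabla \eta| \le C\rho^{-1}$, I would test the weak form of~\eqref{eq:I1-a} with $\varphi = \eta^p(v - \overline{v}_{B_\rho})$. Using the structure conditions \eqref{eq:A1}--\eqref{eq:A2} to bound the divergence term from below by $\int \eta^p |\nabla v|^p$ modulo $\sigma^p$ contributions, and splitting the potential contribution as
\begin{align*}
\int \eta^p \mathbb{V}|v|^{p-2}v(v - \overline{v}_{B_\rho})\,dx = \int \eta^p \mathbb{V}|v|^p\,dx - \overline{v}_{B_\rho}\int \eta^p \mathbb{V}|v|^{p-2}v\,dx,
\end{align*}
I arrive at an inequality of the form
\begin{align*}
\fint_{B_{\rho/2}(y)} \Psi_\sigma(v)\,dx \le C \fint_{B_\rho(y)} \left(\left|\frac{v - \overline{v}_{B_\rho}}{\rho}\right|^p + \sigma^p\right) dx + C|\overline{v}_{B_\rho}|^p \fint_{B_\rho(y)} \mathbb{V}\,dx.
\end{align*}
The last term is the new feature forced by the Schr\"odinger potential.

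The second step is to upgrade this into a reverse H\"older inequality with gain. I would apply the Sobolev--Poincar\'e inequality at the subcritical exponent $p_\ast = \frac{np}{n+p} < p$, yielding
\begin{align*}
\left(\fint_{B_\rho(y)} |v - \overline{v}_{B_\rho}|^p\,dx\right)^{\frac{1}{p}} \le C\rho \left(\fint_{B_\rho(y)} |\nabla v|^{p_\ast}\,dx\right)^{\frac{1}{p_\ast}}.
\end{align*}
For the potential piece $|\overline{v}_{B_\rho}|^p \fint \mathbb{V}$, I would use the Morrey hypothesis \eqref{eq:Morrey_V} to bound $\rho^p \fint_{B_\rho} \mathbb{V}$ by a constant and then rewrite $|\overline{v}_{B_\rho}|^p$ via H\"older together with the preceding Sobolev--Poincar\'e bound in terms of $\fint |\nabla v|^{p_\ast}$ and $\fint \mathbb{V}|v|^p$; here the reverse H\"older property $\mathbb{V} \in \mathbb{RH}^\theta$ lets one control the local $L^\theta$ norm of $\mathbb{V}$ by its $L^1$ mean, exactly as in the chain of inequalities preceding Lemma~\ref{lem:M_alpha}. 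Combining these estimates and absorbing the $\Psi_\sigma(v)$ contributions that reappear on the right, I obtain
\begin{align*}
\fint_{B_{\rho/2}(y)} \Psi_\sigma(v)\,dx \le C \left(\fint_{B_\rho(y)} \Psi_\sigma(v)^{q}\,dx\right)^{\frac{1}{q}}
\end{align*}
for some exponent $q = q(n,p,\theta) \in (0,1)$.

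The third step is to invoke Gehring's lemma on this reverse H\"older inequality, which self-improves the local integrability of $\Psi_\sigma(v)$ past exponent $1$. The smallness of the BMO parameter $\delta$ in $[\mathbb{A}]^{r_0} \le \delta$ shrinks the constants in Step~2 enough to push the improvement up to the target exponent $\theta$, together with the bound $\|\mathbb{V}\|_{L^{\theta;p\theta}} \le 1$. The non-degenerate case $\sigma > 0$ only introduces the extra constant $\sigma^p$ as an inhomogeneity that is absorbed into $\Psi_\sigma$ on both sides of \eqref{est:RH}. The principal obstacle throughout is the treatment of the Schr\"odinger cross term $\overline{v}_{B_\rho}\int \mathbb{V}|v|^{p-2}v$ from Step~1: it is absent from the pure $p$-Laplacian analysis, and closing the self-improvement argument at the correct scale demands the simultaneous use of the reverse H\"older property of $\mathbb{V}$ and the Morrey bound \eqref{eq:Morrey_V}.
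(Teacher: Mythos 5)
The central gap is in your Step~3. Gehring's lemma, applied to a reverse H\"older inequality of the form
\begin{align*}
\fint_{B_{\rho/2}} \Psi_\sigma(v)\,dx \le C \left(\fint_{B_\rho} \Psi_\sigma(v)^{q}\,dx\right)^{\frac{1}{q}}, \qquad q = \tfrac{p_\ast}{p} < 1,
\end{align*}
gives only a small, non-prescribed self-improvement: $\Psi_\sigma(v) \in L^{1+\epsilon_0}_{\mathrm{loc}}$ for some $\epsilon_0 > 0$ depending on $C$, $q$ and $n$. It cannot reach an arbitrary exponent $\theta$, which here is at least $n/p$ and can be close to $n$. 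Moreover, your claim that shrinking the BMO parameter $\delta$ shrinks the constants in Step~2 is false: the constants in the Caccioppoli and Sobolev--Poincar\'e steps depend on the ellipticity ratio $\Upsilon$, $n$ and $p$, but \emph{not} on $[\mathbb{A}]^{r_0}$. The small BMO condition does not enter the Caccioppoli argument at all, so making $\delta$ small does nothing for the Gehring gain. What the $(\delta,r_0)$-BMO hypothesis actually buys is the ability to \emph{compare} $v$ with the solution of a frozen-coefficient reference problem $-\mathrm{div}\,\overline{\mathbb{A}}_{B_\rho}(\nabla w)+\mathbb{V}|w|^{p-2}w=0$, whose gradient enjoys much better integrability; a covering/level-set argument then transports that integrability to $\nabla v$ and bootstraps the reverse H\"older exponent up to $\theta$. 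That comparison mechanism --- the heart of the Lee--Ok argument in \cite{LO_schro} on which the paper relies --- is entirely absent from your sketch.

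A secondary issue is the treatment of the Schr\"odinger cross term $|\overline{v}_{B_\rho}|^p \fint_{B_\rho}\mathbb{V}$ in Step~2. You propose to ``rewrite $|\overline{v}_{B_\rho}|^p$ via H\"older together with the preceding Sobolev--Poincar\'e bound,'' but Sobolev--Poincar\'e controls the oscillation $v - \overline{v}_{B_\rho}$, not the average itself. Splitting $|\overline{v}_{B_\rho}|^p \lesssim |v-\overline{v}_{B_\rho}|^p + |v|^p$ and averaging against $\mathbb{V}$ produces a term $C\fint_{B_\rho}\mathbb{V}|v|^p$ on the full ball with a constant of order one, which is precisely the quantity you are trying to estimate and cannot be absorbed into the left side (it lives on $B_\rho$, the left side on $B_{\rho/2}$). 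Closing this requires either the scale restriction coming from the auxiliary function $m(x,\mathbb{V})$ of Shen's framework, or an iteration scheme tied to the Morrey bound that you have not spelled out. Note also that the paper does not prove Lemma~\ref{lem:RH} itself: it cites \cite{LO_schro} and only remarks that the $\sigma>0$ case follows by the same arguments, so the onus on any proof here is to reproduce the comparison-based machinery of that reference, not a stand-alone Gehring argument.
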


\begin{lemma}\label{lem:RH-boundary}
Let $x_0 \in \partial\Omega$, $R \in (0,r_0/4]$ and $\Omega_{4R}=B_{4R}(x_0) \cap \Omega$. Suppose that $\tilde{v}$ solves the following problem
\begin{equation}\label{eq:RH-v-b}
\begin{cases}-\mathrm{div} \mathbb{A}(x,\nabla \tilde{v}) + \mathbb{V} |\tilde{v}|^{p-2} \tilde{v} & = \ 0, \hspace{1.5cm} \mbox{ in } \Omega_{4R}(x_0), \\
\hspace{2cm} \tilde{v} & = u - \mathsf{g}, \quad \ \quad \mbox{ on } \partial\Omega_{4R}(x_0).
\end{cases}
\end{equation}
Assume moreover that $\mathbb{V} \in \mathbb{RH}^{\theta}$ for some $\theta \in [\frac{n}{p}, n)$ satisfying $\|\mathbb{V}\|_{L^{\theta;p\theta}}(\Omega) \le 1$. Under hypothesis $(\mathcal{H}_{\delta})$ for $\delta$ small enough then there holds    
\begin{equation}\label{est:RH-b}
\left(\fint_{\Omega_{R}(x_0)}(\Psi_{\sigma}(\tilde{v}))^{\theta} dx\right)^{\frac{1}{\theta}}\leq C \fint_{\Omega_{4R}(x_0)} \Psi_{\sigma}(\tilde{v}) dx.
\end{equation}
\end{lemma}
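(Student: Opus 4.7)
The overall plan is to establish the reverse Hölder inequality by a Gehring-type self-improvement applied to a combined Caccioppoli--Sobolev--Poincaré estimate, following the blueprint of Lee and Ok in \cite{LO_schro} for the degenerate case $\sigma=0$. The only new feature in our setting is the presence of the parameter $\sigma \in (0,1]$ coming from the $(p,\sigma)$-monotonicity conditions \eqref{eq:A1}--\eqref{eq:A2}; extra $\sigma^p$ terms will be produced at each step, but they can be absorbed into the right-hand side of \eqref{est:RH-b} since $\sigma^p \le \Psi_\sigma(\tilde{v})$.

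The first step will be to derive a boundary Caccioppoli estimate. Given a ball $B_\varrho(y)$ with $B_{2\varrho}(y) \subset B_{4R}(x_0)$, choose a cutoff $\eta \in C_c^\infty(B_{2\varrho}(y))$ with $\eta=1$ on $B_\varrho(y)$ and $|\nabla \eta| \le C/\varrho$. Since $u=\mathsf{g}$ on $B_{4R}(x_0)\cap\partial\Omega$, the function $\tilde v$ vanishes there, so extending by zero permits the test function $\varphi = (\tilde v - c)\eta^p$ where $c=0$ for boundary balls and $c = (\tilde v)_{B_{2\varrho}}$ for interior balls. Plugging into \eqref{eq:RH-v-b}, using the coercivity \eqref{eq:A2} on the left and the growth \eqref{eq:A1} with Young's inequality on the right, produces
\begin{equation*}
\int_{\Omega_\varrho(y)} \bigl(\sigma^2 + |\nabla\tilde v|^2\bigr)^{\tfrac{p-2}{2}} |\nabla\tilde v|^2 + \mathbb V|\tilde v|^p\,dx \ \le\ \frac{C}{\varrho^p}\int_{\Omega_{2\varrho}(y)}|\tilde v - c|^p\,dx + C\sigma^p|B_{2\varrho}(y)|.
\end{equation*}
The algebraic fact $(\sigma^2+|\xi|^2)^{(p-2)/2}|\xi|^2 + \sigma^p \gtrsim \sigma^p + |\xi|^p$ for $p>1$ then converts the left-hand side into $\int \Psi_\sigma(\tilde v)\eta^p$.

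The second step is to couple this with a Sobolev--Poincaré inequality to produce a reverse Hölder inequality at a low exponent. Pick $q = \max(1, np/(n+p))$; applying Sobolev--Poincaré to $(\tilde v - c)\eta$ on $\Omega_{2\varrho}(y)$ (where the Reifenberg flatness \eqref{hyp:R} with small $\delta$ guarantees a scale-invariant constant) and inserting back into the Caccioppoli inequality yields
\begin{equation*}
\fint_{\Omega_\varrho(y)} \Psi_\sigma(\tilde v)\,dx \ \le\ C\Bigl(\fint_{\Omega_{2\varrho}(y)} \bigl(\Psi_\sigma(\tilde v)\bigr)^{q/p}\,dx\Bigr)^{p/q}.
\end{equation*}
The Schrödinger contribution is controlled by Hölder's inequality with exponents $\theta$ and $\theta'=\theta/(\theta-1)$ combined with the Morrey assumption \eqref{eq:Morrey_V}: the factor $(\fint \mathbb V^\theta)^{1/\theta}$ is dominated by $C\varrho^{-p}$, while the remaining integrability exponent $p\theta'$ satisfies $p\theta' \le p^*=np/(n-p)$ precisely when $\theta \ge n/p$, so a further Sobolev--Poincaré absorbs $\fint \mathbb V|\tilde v|^p$ into $\fint |\nabla\tilde v|^q$ terms, up to an admissible right-hand side contribution.

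The final step applies the classical Gehring lemma (in its Giaquinta--Modica form, adapted to the Reifenberg boundary geometry as in \cite{BW2}) to self-improve the reverse Hölder exponent from $p/q$ up to $\theta$; since $\theta < n$ is finite, a bounded number of iterations suffices. The main obstacle is ensuring that the Gehring iteration is compatible with the Schrödinger term at every dyadic scale: this is where the smallness of the Morrey norm \eqref{eq:Morrey_V} is essential, as it gives a scale-uniform bound $\varrho^p\bigl(\fint_{B_\varrho}\mathbb V^\theta\bigr)^{1/\theta}\le 1$ that propagates through each iteration without deteriorating constants. Interior balls are handled identically, with $c$ chosen as the mean rather than zero, which recovers Lemma \ref{lem:RH} as a by-product and closes the argument.
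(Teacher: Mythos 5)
The paper does not actually prove this lemma: it cites Lee and Ok \cite{LO_schro}, where the $\sigma=0$ case is established, and remarks that the $\sigma>0$ case follows by ``similar arguments.'' Your proposal is therefore an attempt at a full proof, and it contains a genuine gap in the final step.

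Gehring's lemma (in the Giaquinta--Modica form, or in the Reifenberg version from \cite{BW2}) gives $\Psi_\sigma(\tilde v)\in L^{1+\varepsilon}_{\mathrm{loc}}$ for a \emph{small, non-prescribable} $\varepsilon>0$ that depends on the constant in the reverse H\"older hypothesis, the starting exponent and the dimension. After one application you do \emph{not} recover a reverse H\"older inequality between two higher exponents, so there is nothing to iterate: ``a bounded number of iterations'' cannot reach a prescribed exponent $\theta\ge n/p$, which here may be close to $n$. Reaching $L^\theta$ requires a qualitatively different mechanism: one compares $\tilde v$ with the solution of a frozen-coefficient, potential-free reference problem, which under the small $(\delta,r_0)$-BMO condition on $\mathbb{A}$ admits local $W^{1,\infty}$ (interior $C^{1,\alpha}$, boundary $W^{1,\infty}$ via Reifenberg flattening) gradient bounds, and then propagates this regularity to $\tilde v$ because the BMO defect of $\mathbb{A}$ and the Morrey norm of $\mathbb{V}$ are small. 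This is the structure of the proof in \cite{LO_schro}. A revealing symptom of the gap is that your argument never uses the $(\delta,r_0)$-BMO hypothesis on $\mathbb{A}$, which is part of $(\mathcal{H}_\delta)$ and is precisely what makes the comparison step work; a proof that does not invoke it cannot be correct for general $\theta$.

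There is also a secondary technical problem in step two. To control $\fint_{B_\varrho}\mathbb{V}|\tilde v|^p$ by H\"older with exponents $\theta,\theta'$ and then Sobolev--Poincar\'e, you need the embedding $W^{1,s}\hookrightarrow L^{p\theta'}$. As $\theta\downarrow n/p$ one has $p\theta'\uparrow p^*=np/(n-p)$, which forces $s\uparrow p$ rather than your fixed $q=np/(n+p)$. Hence for $\theta$ near $n/p$ the stated reverse H\"older inequality with sub-exponent $q/p<1$ on the right does not hold, and even the Gehring set-up fails before the iteration issue arises.
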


\begin{lemma} \label{lem:Comp}
Let $x_0 \in \overline{\Omega}$, $R \in (0,r_0/2]$ and $\Omega_{2R}=B_{2R}(x_0) \cap \Omega$. Assume that $v$ solves the following homogeneous problem
\begin{equation}\label{eq:I1}
\begin{cases} -\mathrm{div}  \mathbb{A}(x,\nabla v)  + \mathbb{V} |v|^{p-2}v & = \ 0, \quad \ \quad \mbox{ in } \Omega_{2R},\\ 
\hspace{1.2cm} v & = \ u - \mathsf{g}, \ \mbox{ on } \partial \Omega_{2R}.\end{cases}
\end{equation}
One can find positive constants $k=k(p)>0$ and $C = C(\Upsilon,\sigma,n,p)>0$ such that for any $\varepsilon \in (0,1)$ there holds
\begin{align} \label{est:Comp}
\fint_{\Omega_{2R}} \Psi(u-v) dx  & \le \varepsilon  \fint_{\Omega_{2R}} \Psi_{\sigma}(u) dx  +  C\varepsilon^{-k} \fint_{\Omega_{2R}} \left( |\mathbf f|^p + \Psi(\mathsf{g})\right) dx. 
\end{align}
\end{lemma}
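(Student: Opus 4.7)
My approach is to derive the inequality by subtracting the two weak formulations against a common admissible test function. Since $v = u - \mathsf{g}$ on $\partial \Omega_{2R}$, the function $\varphi := u - v - \mathsf{g}$ lies in $W_0^{1,p}(\Omega_{2R})$; extending by zero, it is also admissible in the weak formulation \eqref{var-form} of $u$. Subtracting the two identities yields
\begin{align*}
& \int_{\Omega_{2R}} \bigl(\mathbb{A}(x,\nabla u) - \mathbb{A}(x,\nabla v)\bigr)\cdot(\nabla u - \nabla v)\, dx + \int_{\Omega_{2R}} \mathbb{V}\bigl(|u|^{p-2}u - |v|^{p-2}v\bigr)(u - v)\, dx  \\
& = \int_{\Omega_{2R}} \bigl(\mathbb{A}(x,\nabla u) - \mathbb{A}(x,\nabla v)\bigr)\cdot\nabla\mathsf{g}\, dx + \int_{\Omega_{2R}} \mathbb{V}\bigl(|u|^{p-2}u - |v|^{p-2}v\bigr)\mathsf{g}\, dx \\
& \quad+ \int_{\Omega_{2R}} |\mathbf{f}|^{p-2}\mathbf{f}\cdot(\nabla u - \nabla v - \nabla \mathsf{g})\, dx.
\end{align*}

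The first step is to bound the left-hand side from below. The monotonicity assumption \eqref{eq:A2} produces the familiar $(p,\sigma)$-weighted quantity $\int(\sigma^2 + |\nabla u|^2 + |\nabla v|^2)^{(p-2)/2}|\nabla u - \nabla v|^2$, and the classical vector inequality $(|a|^{p-2}a - |b|^{p-2}b)\cdot(a-b) \ge c(p)(|a|^2 + |b|^2)^{(p-2)/2}|a-b|^2$ yields the analogue for the Schr\"odinger contribution. The right-hand side is estimated using the growth bound \eqref{eq:A1} together with its scalar counterpart $||u|^{p-2}u - |v|^{p-2}v| \le C(|u|^2+|v|^2)^{(p-2)/2}|u-v|$, and standard H\"older-Young splittings pair $\nabla \mathsf{g}$, $\mathsf{g}$ and $\mathbf{f}$ with the weighted factors. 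With a small Young parameter, the right-hand side becomes bounded by $\varepsilon \fint (\Psi_\sigma(u)+\Psi_\sigma(v)) + C \varepsilon^{-k'} \fint (|\mathbf{f}|^p + \Psi(\mathsf{g}))$.

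The main obstacle is the subquadratic range $1<p<2$, for which the monotonicity lower bound is \emph{weaker} than $|\nabla u - \nabla v|^p$ and hence cannot close the estimate by itself. I plan to adapt the algebraic trick used in the proof of Lemma \ref{lem:Global}: write
\begin{align*}
|\nabla u - \nabla v|^p \le \bigl[(\sigma^2+|\nabla u|^2+|\nabla v|^2)^{\frac{p-2}{2}}|\nabla u - \nabla v|^2\bigr]^{p/2}(\sigma^2+|\nabla u|^2+|\nabla v|^2)^{\frac{p(2-p)}{4}},
\end{align*}
and similarly for $\mathbb{V}|u-v|^p$. Young's inequality then decouples the two factors: the first recombines with the weighted left-hand side, while the second is absorbed by the $\fint \Psi_\sigma(u) + \fint \Psi_\sigma(v)$ contributions. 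This step is the source of the explicit $\varepsilon^{-k}$ in \eqref{est:Comp}, with $k=k(p)$ determined by the Young exponents.

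Finally, I would eliminate the $\fint \Psi_\sigma(v)$ term that has appeared on the right. Since $v$ solves a Schr\"odinger $p$-Laplace type equation on $\Omega_{2R}$ with zero source and boundary data $u-\mathsf{g}$, applying the argument of Lemma \ref{lem:Global} localised to $\Omega_{2R}$ gives $\fint_{\Omega_{2R}} \Psi_\sigma(v) \le C \fint_{\Omega_{2R}} (\Psi_\sigma(u) + \Psi(\mathsf{g}))$. Substituting this back, collecting terms, and rescaling $\varepsilon$ one last time produces the stated estimate \eqref{est:Comp}, with the constant depending only on $\Upsilon$, $\sigma$, $n$ and $p$.
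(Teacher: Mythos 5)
Your proof is correct, and its overall scaffolding coincides with the paper's: test both weak formulations with the common test function $u-v-\mathsf{g}$, invoke \eqref{eq:A2} together with the scalar monotonicity inequality to produce the weighted quantity $\Phi_\sigma(u,v)$ on the left, invoke \eqref{eq:A1} and the corresponding scalar growth inequality on the right, apply Young, and then use the algebraic factorisation $|\nabla(u-v)|^p = \bigl[\Phi\text{-weight}\bigr]^{p/2}(\sigma^2+|\nabla u|^2+|\nabla v|^2)^{p(2-p)/4}$ to pass from the weighted quantity to $\Psi(u-v)$ in the range $1<p<2$. The place where you genuinely diverge from the paper is in how the $\nabla v$ (respectively $|v|$) factors on the right-hand side are handled. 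The paper never lets $\Psi_\sigma(v)$ appear: it immediately rewrites $(\sigma^2+|\nabla v|^2)^{(p-1)/2}\le C(p)\bigl(\sigma^{p-1}+|\nabla u|^{p-1}+|\nabla(u-v)|^{p-1}\bigr)$, and similarly for the $\mathbb{V}$-terms, so that every Young step produces either $\Psi_\sigma(u)$, $\Psi(u-v)$, $|\mathbf f|^p$ or $\Psi(\mathsf{g})$; the surviving $\varepsilon_2\fint\Psi(u-v)$ is then self-absorbed into the left-hand side by a last parameter choice. You instead retain $\fint\Psi_\sigma(v)$ explicitly and eliminate it afterwards by applying the energy estimate of Lemma~\ref{lem:Global}, localised to $\Omega_{2R}$ with data $(\mathbf f,\mathsf g)\leftarrow(0,\,u-\mathsf{g})$, which gives $\fint_{\Omega_{2R}}\Psi_\sigma(v)\le C\fint_{\Omega_{2R}}(\Psi_\sigma(u)+\Psi(\mathsf{g}))$; a final rescaling of $\varepsilon$ then collects terms. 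Both routes are sound and yield the same exponent structure $\varepsilon^{-k}$: the paper's version is entirely algebraic and avoids quoting a second energy estimate, while yours is arguably more transparent (you bound $\Psi_\sigma(v)$ by the obvious global bound rather than trading $\nabla v$ for $\nabla(u-v)$), at the cost of the extra localised application of Lemma~\ref{lem:Global}.
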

\begin{proof}
Let us test equation~\eqref{eq:I1} by $u-v-\mathsf{g}$ and choose $\varphi$ by this function in \eqref{var-form}. Subtracting these observed formulas, one obtains that
\begin{align}\label{est:200}
& \int_{\Omega_{2R}} \left(\mathbb{A}(x,\nabla u)-\mathbb{A}(x, \nabla v)\right) \cdot \nabla (u-v) dx + \int_{\Omega_{2R}}\mathbb{V}(|u|^{p-2}u-|v|^{p-2}v)(u-v) dx \notag \\ 
& \hspace{1cm} =  \int_{\Omega_{2R}} \left(\mathbb{A}(x,\nabla u)-\mathbb{A}(x, \nabla v) \right) \cdot \nabla \mathsf{g}  dx + \int_{\Omega_{2R}}\mathbb{V}(|u|^{p-2}u-|v|^{p-2}v)\mathsf{g} dx  \notag \\ 
& \hspace{3cm} + \int_{\Omega_{2R}}|\mathbf f|^{p-2}\mathbf{f} \cdot (\nabla u - \nabla v)dx - \int_{\Omega_{2R}}|\mathbf{f}|^{p-2}\mathbf{f} \cdot \nabla \mathsf{g}dx .
\end{align}
Let us introduce a new function $\Phi_{\sigma}: \ (W^{1,p}(\Omega))^2 \to \mathbb{R}^+$ determined by
\begin{align}\label{def:Phi}
\Phi_{\sigma}(\varphi, \psi) & := (\sigma^2 + |\nabla \varphi|^2 + |\nabla \psi|^2)^{\frac{p-2}{2}}|\nabla (\varphi - \psi)|^2 \notag \\ & \hspace{3cm} + \mathbb{V}  \left(|\varphi|^2 + |\psi|^2\right)^{\frac{p-2}{2}} |\varphi -\psi|^2, 
\end{align}
for all $\varphi$ and $\psi \in W^{1,p}(\Omega)$. We recall the following classical relation 
$$\big||\varphi|^{p-2}\varphi-|\psi|^{p-2}\psi\big| \sim \left(|\varphi|^2 + |\psi|^2\right)^{\frac{p-2}{2}}|\varphi-\psi|,$$ 
whose proof can be seen as a special case of~\cite[Lemma 2.1]{H92} for instance. Combining the above inequality with the definition of $\Phi_{\sigma}$ in~\eqref{def:Phi} and assumptions~\eqref{eq:A1}, \eqref{eq:A2}, it implies from~\eqref{est:200} that 
\begin{align} \label{est:201}
 \fint_{\Omega_{2R}} \Phi_{\sigma}(u,v) dx & \leq C \fint_{\Omega_{2R}}\left[(\sigma^2 + |\nabla u|^2)^{\frac{p-1}{2}} + (\sigma^2 + |\nabla v|^2)^{\frac{p-1}{2}}\right] |\nabla \mathsf{g}| dx \notag \\ 
& \hspace{1cm} +  \fint_{\Omega_{2R}} \mathbb{V}  \left(|u|^2 + |v|^2\right)^{\frac{p-2}{2}} |u -v| |\mathsf{g}| dx \notag \\ 
& \hspace{2cm} + \fint_{\Omega_{2R}} |\nabla u -\nabla v||\mathbf{f} |^{p-1}dx + \fint_{\Omega_{2R}}|\mathbf{f}|^{p-1}|\nabla \mathsf{g}|dx .
\end{align}
Let us now estimate the middle-term on the right-hand side of~\eqref{est:201}. For $1 < p \le 2$, it is easy to see that
\begin{align*}
\fint_{\Omega_{2R}} \mathbb{V} \left(|u|^2 + |v|^2\right)^{\frac{p-2}{2}} |u -v| |\mathsf{g}| dx \le C(p) \fint_{\Omega_{2R}} \mathbb{V} |u-v|^{p-1} |\mathsf{g}| dx.
\end{align*}
Otherwise, when $p>2$, then $ \left(|u|^2 + |v|^2\right)^{\frac{p-2}{2}} \le C(p) \left(|u-v|^{p-2} + |u|^{p-2}\right)$ which yields
\begin{align*}
\fint_{\Omega_{2R}} \mathbb{V} \left(|u|^2 + |v|^2\right)^{\frac{p-2}{2}} |u -v| |\mathsf{g}| dx & \le C(p) \left(\fint_{\Omega_{2R}} \mathbb{V} |u-v|^{p-1} |\mathsf{g}| dx \right. \\
& \hspace{3cm} \left. + \fint_{\Omega_{2R}} \mathbb{V} |u|^{p-2} |u-v| |\mathsf{g}| dx. \right)
\end{align*}
Taking two above inequalities into account, we are able to estimate the terms on the right-hand side of~\eqref{est:201} by applying the following fundamental inequality
$$(\sigma^2+|\varphi_2|^2)^{\frac{p-1}{2}} \leq C(p) \left(\sigma^{p-1} + |\varphi_1|^{p-1} + |\varphi_1 -\varphi_2|^{p-1}\right),$$
and the consequences of Young's inequality for every $\varepsilon, \varepsilon_1, \varepsilon_2>0$ as below
\begin{align*}
|\varphi_1|^{p-1} |\varphi_2| \le \varepsilon |\varphi_1|^p +  \varepsilon^{1-p} |\varphi_2|^p, \quad |\varphi_1| |\varphi_2|^{p-1} \le \varepsilon |\varphi_1|^p + C \varepsilon^{\frac{1}{1-p}} |\varphi_2|^p,
\end{align*}
for $p>1$ and
\begin{align*}
 |\varphi_1|^{p-2} |\varphi_2| |\varphi_3| \le \varepsilon_1 |\varphi_1|^p + \varepsilon_2 |\varphi_2|^p + C \varepsilon_1^{2-p} \varepsilon_2^{-1} |\varphi_3|^p, \quad \mbox{for } p > 2.
\end{align*}
Applying these inequalities, from \eqref{est:201} it allows us to arrive
\begin{align} 
 \fint_{\Omega_{2R}} \Phi_{\sigma}(u, v)dx & \leq C \left[\varepsilon_1 \fint_{\Omega_{2R}}\left(\sigma^p + |\nabla u|^{p} +  \mathbb{V} |u|^p \right)dx + \varepsilon_1^{1-p}\fint_{\Omega_{2R}}|\nabla \mathsf{g}|^p dx \right. \notag \\ 
& \hspace{2cm} \left.  + \varepsilon_2\fint_{\Omega_{2R}} \Psi(u-v) dx +  \varepsilon_2^{\frac{1}{1-p}}\fint_{\Omega_{2R}}|\mathbf{f}|^{p}dx \right. \notag \\
& \hspace{3cm} \left. + \left(\varepsilon_2^{1-p} +  \varepsilon_1^{2-p} \varepsilon_2^{-1} \right)\fint_{\Omega_{2R}} \Psi(\mathsf{g}) dx \right], \notag
\end{align}
which can be rewritten as
\begin{align} \label{est:02}
& \fint_{\Omega_{2R}} \Phi_{\sigma}(u, v)dx  \leq C \left[\varepsilon_1 \fint_{\Omega_{2R}} \Psi_{\sigma}(u) dx + \varepsilon_2\fint_{\Omega_{2R}} \Psi(u-v) dx  \right. \notag \\ 
& \hspace{2cm} \left.  + \left(\varepsilon_2^{\frac{1}{1-p}} + \varepsilon_1^{1-p} + \varepsilon_2^{1-p} +  \varepsilon_1^{2-p} \varepsilon_2^{-1}\right)\fint_{\Omega_{2R}} |\mathbf{f}|^{p} + \Psi(\mathsf{g}) dx \right],
\end{align}
for every $\varepsilon_1, \, \varepsilon_2 \in (0,1)$, where $\Psi_{\sigma}$ and $\Psi$ are defined as in~\eqref{def:Psi}. Using the same technique as the proof of~\eqref{est:0100}, one has no difficulty to show 
\begin{align}\label{Phi-ineq}
\fint_{\Omega_{2R}} \Psi(u-v) dx \le \varepsilon_3 \fint_{\Omega_{2R}} \Psi_{\sigma}(u) dx + C \varepsilon_3^{-\vartheta} \fint_{\Omega_{2R}} \Phi_{\sigma}(u, v)dx,
\end{align}
for all $\varepsilon_3>0$ with $\vartheta = \max\left\{0,\frac{2}{p}-1\right\}$. Let us now substitute~\eqref{est:02} into~\eqref{Phi-ineq} to observe
\begin{align}\label{est:202}
& \fint_{\Omega_{2R}} \Psi(u-v) dx \le \left(\varepsilon_3 + C \varepsilon_1 \varepsilon_3^{-\vartheta}\right) \fint_{\Omega_{2R}} \Psi_{\sigma}(u) dx + C \varepsilon_2 \varepsilon_3^{-\vartheta}  \fint_{\Omega_{2R}} \Psi(u-v) dx \notag \\
& \hspace{2cm}  + C\left(\varepsilon_2^{\frac{1}{1-p}} + \varepsilon_1^{1-p} + \varepsilon_2^{1-p} +  \varepsilon_1^{2-p} \varepsilon_2^{-1}\right) \varepsilon_3^{-\vartheta}\fint_{\Omega_{2R}} |\mathbf{f}|^{p} + \Psi(\mathsf{g}) dx.
\end{align}
The remaining point concerns to choose suitable values of $\varepsilon_1$, $\varepsilon_2$ and $\varepsilon_3$ in~\eqref{est:202} depending on an arbitrary number $\varepsilon \in (0,1)$. More precisely, one can choose
\begin{align*}
\varepsilon_3 = \frac{\varepsilon}{4}, \quad C \varepsilon_2 \varepsilon_3^{-\vartheta} \le \frac{1}{2}, \quad \varepsilon_3 + C \varepsilon_1 \varepsilon_3^{-\vartheta} \le \frac{\varepsilon}{2},
\end{align*}
which allows us to conclude~\eqref{est:Comp} with $k$ determined by
$$k = \vartheta + \max\left\{\left(1+\vartheta\right)(p-1); \ \frac{\vartheta}{p-1}\right\}>0.$$
It finishes the proof.
\end{proof}

\section{Level-set inequalities on distribution function}
\label{sec:level}
In this section, we establish several level-set inequalities related to the distribution function of measurable functions which is considered in our previous works such as~\cite{PN_dist}. For every measurable function $f$ on $\Omega$ and $K \subset \mathbb{R}^n$, the distribution function $d_f(K,\cdot)$ of $f$ is defined in $\mathbb{R}^+$ as follows
\begin{align}\label{def-Df}
d_f(K,\lambda) = \mathcal{L}^n \left(\left\{x \in K \cap \Omega: \ |f(x)|> \lambda\right\}\right), \quad \lambda \ge 0.
\end{align}
If $\Omega \subset K$, we write $d_f(\lambda)$ instead of $d_f(\Omega,\lambda)$ for simplicity. 

\begin{lemma}\label{lem:A1}
Let $\alpha \in [0,n)$ and $a >0$. One can choose $b=b(\alpha,a,n)>0$ and $\varepsilon_0 = \varepsilon_0(\alpha,a,b,n,\mathrm{diam}(\Omega)/R)>0$ such that if provided $x_1 \in \Omega$ satisfying $\mathbf{M_\alpha}(|\mathbf{f}|^p + \Psi_{\sigma}(\mathsf{g}))(x_1) \leq \varepsilon^b \lambda$ for some $\varepsilon \in (0,\varepsilon_0)$, $\lambda>0$ then for every $R>0$, there holds
\begin{align}\label{est:4.2}
d_{\mathbf{M}_\alpha(\Psi_{\sigma}(u))}(\Omega, \varepsilon^{-a} \lambda) \leq  \varepsilon \mathcal{L}^n(B_R(0)).
\end{align}
\end{lemma}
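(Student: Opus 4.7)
The plan is to deduce the level-set inequality \eqref{est:4.2} from three already-available ingredients: the pointwise smallness of $\mathbf{M}_\alpha(|\mathbf{f}|^p+\Psi_\sigma(\mathsf{g}))$ at $x_1$, the global Lebesgue estimate in Lemma \ref{lem:Global}, and the weak-type bound for the fractional maximal operator in Lemma \ref{lem:M_alpha}. No PDE comparison is required here; everything follows by harmonic-analytic bookkeeping.

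First I would test the supremum defining $\mathbf{M}_\alpha(|\mathbf{f}|^p+\Psi_\sigma(\mathsf{g}))(x_1)$ against a radius $\varrho\ge 2\,\mathrm{diam}(\Omega)$, so that $\Omega\subset B_\varrho(x_1)$. Writing out the definition and using the hypothesis translates the pointwise assumption into the global bound
$$\int_\Omega\bigl(|\mathbf{f}|^p+\Psi_\sigma(\mathsf{g})\bigr)\,dx \;\le\; \omega_n\,\varrho^{\,n-\alpha}\varepsilon^b\lambda \;\le\; C(n,\alpha)\,(\mathrm{diam}\,\Omega)^{\,n-\alpha}\varepsilon^b\lambda,$$
where $\omega_n$ is the Lebesgue measure of the unit ball. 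Feeding this into Lemma \ref{lem:Global} produces $\int_\Omega\Psi_\sigma(u)\,dx\le C(\Upsilon,\sigma,n,p)(\mathrm{diam}\,\Omega)^{n-\alpha}\varepsilon^b\lambda$. Extending $\Psi_\sigma(u)$ by zero outside $\Omega$ and applying Lemma \ref{lem:M_alpha} with $s=1$ (which is legitimate since $0\le\alpha<n$) then yields
$$d_{\mathbf{M}_\alpha(\Psi_\sigma(u))}(\Omega,\varepsilon^{-a}\lambda) \;\le\; C\!\left(\!\frac{\int_\Omega\Psi_\sigma(u)\,dy}{\varepsilon^{-a}\lambda}\right)^{\!\!\frac{n}{n-\alpha}}\!\!\le\; C_\star\,\varepsilon^{(a+b)\frac{n}{n-\alpha}}(\mathrm{diam}\,\Omega)^{n},$$
with $C_\star=C_\star(a,\alpha,\Upsilon,\sigma,n,p)$.

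To finish, I would choose $b=b(a,\alpha,n)>\max\{0,\,(n-\alpha)/n-a\}$ so that the exponent $(a+b)n/(n-\alpha)-1$ is strictly positive and depends only on the admissible parameters, and then pick $\varepsilon_0=\varepsilon_0(a,b,\alpha,n,\mathrm{diam}(\Omega)/R)$ so small that
$$\varepsilon^{(a+b)\frac{n}{n-\alpha}-1} \;\le\; \frac{\omega_n}{C_\star}\Bigl(\frac{R}{\mathrm{diam}\,\Omega}\Bigr)^{\!n} \qquad \text{for all } \varepsilon\in(0,\varepsilon_0).$$
Multiplying through by $\varepsilon\,\omega_n R^n=\varepsilon\,\mathcal{L}^n(B_R(0))$ turns the previous display into \eqref{est:4.2}. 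The argument has no genuine obstacle: the only point demanding attention is the bookkeeping of the $\varepsilon$-exponents, namely ensuring that the gain $\varepsilon^{(a+b)n/(n-\alpha)}$ produced by Lemma \ref{lem:M_alpha} overcomes the single factor of $\varepsilon$ required on the right-hand side of \eqref{est:4.2}; the recorded dependence of $\varepsilon_0$ on $\mathrm{diam}(\Omega)/R$ is the unavoidable cost of trading the geometric factor $(\mathrm{diam}\,\Omega)^n$ for $R^n$.
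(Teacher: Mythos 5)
Your proof is correct and follows essentially the same route as the paper: convert the pointwise bound $\mathbf{M}_\alpha(|\mathbf{f}|^p+\Psi_\sigma(\mathsf{g}))(x_1)\le\varepsilon^b\lambda$ into a global $L^1$ bound on $|\mathbf{f}|^p+\Psi_\sigma(\mathsf{g})$ via a ball of radius comparable to $\mathrm{diam}(\Omega)$, feed it through Lemma~\ref{lem:Global} and the weak-type bound of Lemma~\ref{lem:M_alpha} with $s=1$, and then pick $b>\max\{0,\,1-\alpha/n-a\}$ and $\varepsilon_0$ to absorb the geometric factor $(\mathrm{diam}\,\Omega/R)^n$. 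The choice of exponent for $b$ and the dependence of $\varepsilon_0$ on $\mathrm{diam}(\Omega)/R$ both match the paper's argument.
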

\begin{proof}
With distribution function $d_f$ defined as in~\eqref{def-Df}, combining between the boundedness of fractional maximal function $\mathbf{M}_\alpha$ and the global estimate~\eqref{est:Global} given as in Lemma~\ref{lem:Global}, one gets that 
\begin{align}\label{est:4.2.1}
d_{\mathbf{M}_\alpha(\Psi_{\sigma}(u))}(\varepsilon^{-a}\lambda) \leq C\left(\frac{1}{\varepsilon^{-a}\lambda}\int_{\Omega} \Psi_{\sigma}(u) dx \right)^{\frac{n}{n-\alpha}} \leq C\left(\frac{1}{\varepsilon^{-a}\lambda}\int_{\Omega} (|\mathbf{f}|^p + \Psi_{\sigma}(\mathsf{g})) dx \right)^{\frac{n}{n-\alpha}}.
\end{align}
Let us remind the fact $\mathbf{M}_\alpha(|\mathbf{f}|^p + \Psi_{\sigma}(\mathsf{g}))(x_1) \le \varepsilon^{b}\lambda$ and $\Omega \subset Q := B_{D_0}(x_1)$ with $D_0=\mathrm{diam}(\Omega)$, it implies from~\eqref{est:4.2.1} that 
\begin{align}\label{est:4.2.20}
d_{\mathbf{M}_\alpha(\Psi_{\sigma}(u))}(\varepsilon^{-a}\lambda) &\le C\left(\frac{\mathcal{L}^n(Q)}{\varepsilon^{-a}\lambda}D_0^{-\alpha}\mathbf{M}_\alpha(|\mathbf{f}|^p + \Psi_{\sigma}(\mathsf{g}))(x_1) \right)^{\frac{n}{n-\alpha}} \notag \\
& \le C \left(\frac{D_0}{R}\right)^n \varepsilon^{\frac{(a+b)n}{n-\alpha}} \mathcal{L}^n(B_R(0)).
\end{align} 
For every $\alpha \in [0,n)$ and $a >0$, we may choose $b$ in~\eqref{est:4.2.20} and $\varepsilon_0>0$ such that
\begin{align*}
b > \max\left\{0; \ 1- a - \frac{\alpha}{n}\right\} \ \mbox{ and } \ C \left(\frac{D_0}{R}\right)^n \varepsilon_0^{\frac{(a+b)n}{n-\alpha}-1} < 1.
\end{align*}
That leads to~\eqref{est:4.2} for all $\varepsilon \in (0,\varepsilon_0)$.
\end{proof}

\begin{lemma}\label{lem:A2}
Let $\alpha \in [0,n)$ and $x_2 \in \Omega_{R}(x_0)$ satisfying $\mathbf{M}_\alpha(\Psi_{\sigma}(u))(x_2) \le \lambda$ for some $\lambda>0$. Then for every $a >0$ the following inequality 
\begin{align} \label{est:4.3}
d_{\mathbf{M}_{\alpha}(\Psi_{\sigma}(u))}(\Omega_{R}(x_0), \varepsilon^{-a}\lambda) \le d_{\mathbf{M}_{\alpha}(\chi_{B_{2R}(x_0)}\Psi_{\sigma}(u))}(\Omega_{R}(x_0), \varepsilon^{-a}\lambda),
\end{align}
holds for all $0< \varepsilon \le 3^{-\frac{n+1}{a}}$.
\end{lemma}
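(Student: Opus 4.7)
The plan is to reduce the distribution-function inequality to the pointwise set inclusion
\[
\{x \in \Omega_R(x_0) : \mathbf{M}_\alpha(\Psi_\sigma(u))(x) > \varepsilon^{-a}\lambda\} \subset \{x \in \Omega_R(x_0) : \mathbf{M}_\alpha(\chi_{B_{2R}(x_0)}\Psi_\sigma(u))(x) > \varepsilon^{-a}\lambda\},
\]
which, by monotonicity of Lebesgue measure, yields \eqref{est:4.3} at once. To prove this inclusion, I will split the supremum defining $\mathbf{M}_\alpha(\Psi_\sigma(u))(x)$ at the threshold $\varrho = R$, handle small and large radii separately, then use the hypothesis $\mathbf{M}_\alpha(\Psi_\sigma(u))(x_2) \le \lambda$ to absorb the large-radii contribution.

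For $x \in \Omega_R(x_0)$ and $\varrho \le R$, the triangle inequality combined with $|x - x_0| < R$ forces $B_\varrho(x) \subset B_{2R}(x_0)$, so $\Psi_\sigma(u)$ and $\chi_{B_{2R}(x_0)}\Psi_\sigma(u)$ coincide on $B_\varrho(x)$, which immediately gives
\[
\sup_{0 < \varrho \le R} \varrho^\alpha \fint_{B_\varrho(x)} \Psi_\sigma(u)(y)\, dy \le \mathbf{M}_\alpha(\chi_{B_{2R}(x_0)}\Psi_\sigma(u))(x).
\]
For the complementary range $\varrho > R$, the fact that $x$ and $x_2$ both lie in $B_R(x_0)$ yields the inclusion $B_\varrho(x) \subset B_{\varrho + 2R}(x_2)$, and a short manipulation of averages produces
\[
\varrho^\alpha \fint_{B_\varrho(x)} \Psi_\sigma(u)\, dy \le \left(\frac{\varrho + 2R}{\varrho}\right)^{n-\alpha} (\varrho + 2R)^\alpha \fint_{B_{\varrho + 2R}(x_2)} \Psi_\sigma(u)\, dy.
\]
Since $(\varrho + 2R)/\varrho < 3$ whenever $\varrho > R$, the first factor is at most $3^n$, while the remaining quantity is controlled by $\mathbf{M}_\alpha(\Psi_\sigma(u))(x_2) \le \lambda$, giving the uniform bound $3^n\lambda$ on the large-radii contribution.

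Combining the two cases yields
\[
\mathbf{M}_\alpha(\Psi_\sigma(u))(x) \le \max\left\{\mathbf{M}_\alpha(\chi_{B_{2R}(x_0)}\Psi_\sigma(u))(x),\; 3^n\lambda\right\}.
\]
The constraint $\varepsilon \le 3^{-(n+1)/a}$ amounts to $\varepsilon^{-a} \ge 3^{n+1}$, hence $3^n\lambda < \varepsilon^{-a}\lambda$ with a factor of $3$ to spare. Thus whenever the left-hand side strictly exceeds $\varepsilon^{-a}\lambda$, the maximum must be realized by the first term, which is exactly the required inclusion. The only point that requires any care is the choice of $x_2$ (rather than $x$ or $x_0$) as the center of the enlarged ball, so that the hypothesis $\mathbf{M}_\alpha(\Psi_\sigma(u))(x_2) \le \lambda$ is actually available; the extra factor of $3$ in the exponent of the threshold (so $3^{n+1}$ rather than $3^n$) is built in precisely to turn the non-strict bound $3^{n-\alpha} \le 3^n$ into the strict inequality needed for the set inclusion.
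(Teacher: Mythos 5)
Your proof is correct and follows essentially the same approach as the paper: split the defining supremum at $\varrho = R$, absorb the small-radii part into $\mathbf{M}_\alpha(\chi_{B_{2R}(x_0)}\Psi_\sigma(u))$ via $B_\varrho(x)\subset B_{2R}(x_0)$, and bound the large-radii part by $3^n\lambda$ using the hypothesis at $x_2$. The only cosmetic differences are that the paper names the two truncated operators $\mathbf{M}_\alpha^R$ and $\mathbf{T}_\alpha^R$ and enlarges to $B_{3\varrho}(x_2)$ rather than your $B_{\varrho+2R}(x_2)$ (both yield the same $3^n$ constant), and the paper phrases the conclusion as the $\mathbf{T}_\alpha^R$-distribution function vanishing rather than a set inclusion.
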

\begin{proof}
Let us first introduce the cut-off version corresponding to the fractional maximal function $\mathbf{M}_{\alpha}$ of $f$ at the cut-off order $R>0$ as below
$$ \mathbf{M}_{\alpha}^{R} f(z) = \sup_{0<r<R} r^{\alpha}\fint_{B_r(z)}f(x)dx, \ \mbox{ and } \ \mathbf{T}_{\alpha}^{R} f(x) = \sup_{r \ge R} r^{\alpha}\fint_{B_r(z)}f(x)dx.$$
Using this notation, for every $z$ in $B_{R}(x_0)$ we can separate
\begin{align*}
\mathbf{M}_{\alpha}(\Psi_{\sigma}(u))(z) \leq \max \left\{ \mathbf{M}_{\alpha}^{R}(\Psi_{\sigma}(u))(z); \ \mathbf{T}_{\alpha}^{R}(\Psi_{\sigma}(u))(z)\right\},
\end{align*}
which gives us the following estimate
\begin{align}\label{est:4.3.2}
d_{\mathbf{M}_{\alpha}(\Psi_{\sigma}(u))}(\Omega_{R}(x_0), \varepsilon^{-a}\lambda) \le d_{\mathbf{M}_{\alpha}^{R}(\Psi_{\sigma}(u))}(\Omega_{R}(x_0), \varepsilon^{-a}\lambda) + d_{\mathbf{T}_{\alpha}^{R}(\Psi_{\sigma}(u))}(\Omega_{R}(x_0), \varepsilon^{-a}\lambda).
\end{align}
For all $r \ge R$, one can check $B_r(z) \subset B_{3r}(x_2)$ to point out
\begin{align*}
\mathbf{T}_{\alpha}^{R}(\Psi_{\sigma}(u))(z) & \le \sup_{r \ge R} r^{\alpha}\frac{\mathcal{L}^n(B_{3r}(x_2))}{\mathcal{L}^n(B_r(z))} \fint_{B_{3r}(x_2)}\Psi_{\sigma}(u)dx  \le 3^n \mathbf{M}_{\alpha}(\Psi_{\sigma}(u))(x_2),
\end{align*}
which yields that 
$$d_{\mathbf{T}_{\alpha}^{R}(\Psi_{\sigma}(u))}(\Omega_{R}(x_0), \varepsilon^{-a}\lambda) = 0 \ \mbox{ for all } \ \varepsilon^{-a} > 3^{n+1},$$ 
under assumption $\mathbf{M}_{\alpha}(\Psi_{\sigma}(u))(x_2) \le \lambda$. For this reason, one may conclude~\eqref{est:4.3} from~\eqref{est:4.3.2} by using the fact that
\begin{align}\label{est:4.3.3}
\mathbf{M}_{\alpha}^{R}(\Psi_{\sigma}(u))(z) = \sup_{0<r<R} r^{\alpha}\fint_{B_r(z)}\chi_{B_{2R}(x_0)}\Psi_{\sigma}(u)dx \le \mathbf{M}_{\alpha}(\chi_{B_{2R}(x_0)}\Psi_{\sigma}(u))(z).
\end{align} 
Here the last inequality of~\eqref{est:4.3.3} comes from a notice that $B_r(x) \subset B_{2R}(x_0)$ for any $r \in (0,R)$.
\end{proof}

\begin{lemma}\label{lem:A3}
For any $\alpha \in [0,\frac{n}{\theta})$ and $a > \frac{1}{\theta} - \frac{\alpha}{n}$, one can choose constants $b=b(\alpha,\theta,a,n)>0$ and $\varepsilon_0=\varepsilon_0(\alpha,\theta,a,b,\Upsilon,n,p)>0$ such that if there exist $x_1, \, x_2 \in B_R(x_0)$ satisfying
\begin{align}\label{est:cd 4.4}
\mathbf{M}_{\alpha}(\Psi_{\sigma}(u))(x_1) \le \lambda \hspace{0.2 cm} and \hspace{0.2 cm} \mathbf{M}_{\alpha}(|\mathbf{f}|^p + \Psi_{\sigma}(\mathsf{g}))(x_2) \le \varepsilon^b\lambda,
\end{align}
then the following inequality
\begin{align}\label{est:lem 4.4}
d_{\mathbf{M}_{\alpha}(\chi_{B_{2R}(x_0)}\Psi_{\sigma}(u))}(\Omega_R, \varepsilon^{-a}\lambda) \le \varepsilon \mathcal{L}^n(B_R(x_0)),
\end{align}
is valid for $\lambda>0$ and $0< \varepsilon <\varepsilon_0$.
\end{lemma}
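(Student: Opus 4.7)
My plan is to introduce the reference function $v$ via Lemma \ref{lem:Comp} applied on a slightly enlarged domain $\Omega_{4R}(x_0)$ (which costs only $R\le r_0/8$), so that the reverse H\"older inequality of Lemma \ref{lem:RH} or \ref{lem:RH-boundary} can later deliver higher integrability of $\Psi_\sigma(v)$ on the correct ball $B_{2R}(x_0)$. From $|a+b|^p\le 2^{p-1}(|a|^p+|b|^p)$ applied to the gradient and Schr\"odinger terms one has pointwise $\Psi_\sigma(u)\le C_p(\Psi_\sigma(v)+\Psi(u-v))$, so by sublinearity of $\mathbf{M}_\alpha$,
\[
d_{\mathbf{M}_\alpha(\chi_{B_{2R}(x_0)}\Psi_\sigma(u))}(\Omega_R,\varepsilon^{-a}\lambda)\le \mathcal{A}_1+\mathcal{A}_2,
\]
where $\mathcal{A}_1$ and $\mathcal{A}_2$ are the distribution functions on $\Omega_R$ of $\mathbf{M}_\alpha(\chi_{B_{2R}(x_0)}\Psi_\sigma(v))$ and $\mathbf{M}_\alpha(\chi_{B_{2R}(x_0)}\Psi(u-v))$ at level $c\varepsilon^{-a}\lambda$ with $c=1/(2C_p)$. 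Since $x_1,x_2\in B_R(x_0)$ forces $B_{4R}(x_0)\subset B_{5R}(x_j)$, the hypotheses \eqref{est:cd 4.4} immediately yield, for both $\rho=2R$ and $\rho=4R$,
\[
\fint_{B_{\rho}(x_0)}\Psi_\sigma(u)\,dx\le CR^{-\alpha}\lambda,\qquad \fint_{B_{\rho}(x_0)}(|\mathbf{f}|^p+\Psi(\mathsf{g}))\,dx\le CR^{-\alpha}\varepsilon^{b}\lambda.
\]

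\textbf{Comparison term $\mathcal{A}_2$.} Lemma \ref{lem:M_alpha} with $s=1$ gives $\mathcal{A}_2\le C(\varepsilon^{a}\lambda^{-1}\int_{B_{2R}}\Psi(u-v)\,dx)^{n/(n-\alpha)}$. Inserting Lemma \ref{lem:Comp} with an auxiliary parameter $\varepsilon_1\in(0,1)$ and the two size bounds above, I get $\int_{B_{2R}}\Psi(u-v)\,dx\le CR^{n-\alpha}(\varepsilon_1+\varepsilon_1^{-k}\varepsilon^{b})\lambda$. The balanced choice $\varepsilon_1=\varepsilon^{b/(k+1)}$ reduces the bracket to $2\varepsilon^{b/(k+1)}$, whence
\[
\mathcal{A}_2\le C\mathcal{L}^n(B_R)\,\varepsilon^{n[a+b/(k+1)]/(n-\alpha)}.
\]
Picking $b>(k+1)\max\{(n-\alpha)/n-a,\,0\}$ makes this exponent strictly exceed $1$, so $\mathcal{A}_2\le\tfrac{\varepsilon}{2}\mathcal{L}^n(B_R)$ whenever $\varepsilon<\varepsilon_0$ is small.

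\textbf{Homogeneous term $\mathcal{A}_1$.} Because $\alpha\theta<n$ by hypothesis, Lemma \ref{lem:M_alpha} with $s=\theta$ applies to give
\[
\mathcal{A}_1\le C\left(\varepsilon^{a\theta}\lambda^{-\theta}\int_{B_{2R}}\Psi_\sigma(v)^{\theta}\,dx\right)^{n/(n-\alpha\theta)}.
\]
The reverse H\"older inequality of Lemma \ref{lem:RH} (interior) or \ref{lem:RH-boundary} (boundary), applied at the enlarged scale, together with $\fint_{B_{4R}}\Psi_\sigma(v)\le\fint_{B_{4R}}\Psi_\sigma(u)+C\fint_{B_{4R}}\Psi(u-v)\le CR^{-\alpha}\lambda$, delivers $\int_{B_{2R}}\Psi_\sigma(v)^{\theta}\,dx\le CR^{n-\alpha\theta}\lambda^{\theta}$. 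Substituting,
\[
\mathcal{A}_1\le C\mathcal{L}^n(B_R)\,\varepsilon^{an\theta/(n-\alpha\theta)},
\]
and the hypothesis $a>\tfrac{1}{\theta}-\tfrac{\alpha}{n}$ is precisely the statement $\tfrac{an\theta}{n-\alpha\theta}>1$, so $\mathcal{A}_1\le\tfrac{\varepsilon}{2}\mathcal{L}^n(B_R)$ once $\varepsilon_0$ is small enough.

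\textbf{Main obstacle.} The principal technical subtlety is the scale matching behind $\mathcal{A}_1$: Lemmas \ref{lem:RH}, \ref{lem:RH-boundary} deliver $L^\theta$ control of $\Psi_\sigma(v)$ only on balls strictly smaller than the domain on which $v$ solves the homogeneous equation, which forces the enlargement of the scale at which Lemma \ref{lem:Comp} is invoked; in the boundary case this relies crucially on the Reifenberg flatness encoded in $(\mathcal{H}_\delta)$ to transfer the reverse H\"older estimate from a flat half-ball to $\Omega_{2R}(x_0)$. The three parameters are then fixed in the order: $\delta$ first, from Lemmas \ref{lem:Comp}, \ref{lem:RH}, \ref{lem:RH-boundary}; then $b$, from the exponent bookkeeping in $\mathcal{A}_2$; and finally $\varepsilon_0$, chosen small enough to absorb all multiplicative constants appearing in both $\mathcal{A}_1$ and $\mathcal{A}_2$.
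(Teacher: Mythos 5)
Your proof is correct and follows essentially the same route as the paper: decompose $\Psi_\sigma(u)\le C_p\bigl(\Psi_\sigma(v)+\Psi(u-v)\bigr)$, hit the comparison term with Lemma~\ref{lem:M_alpha} at $s=1$ and the homogeneous term at $s=\theta$ after the reverse H\"older lemma, balance $\varepsilon_1=\varepsilon^{b/(k+1)}$, and observe that $a>\tfrac1\theta-\tfrac\alpha n$ is exactly $\tfrac{an\theta}{n-\alpha\theta}>1$; your choice $b>(k+1)\max\{1-\tfrac\alpha n-a,0\}$ coincides with the paper's. The one place you stay hand-wavy is the boundary case: Lemma~\ref{lem:RH-boundary} is stated with its center \emph{on} $\partial\Omega$, so when $B_{4R}(x_0)$ merely meets $\partial\Omega$ you cannot apply it centered at $x_0$; the paper picks $x_3\in\partial\Omega$ with $|x_3-x_0|\le 4R$ and re-solves the homogeneous problem on $\Omega_{24R}(x_3)$ (forcing $R\le r_0/24$ rather than your $R\le r_0/8$), using $B_{2R}(x_0)\subset B_{6R}(x_3)$ for the decomposition. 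You flag this correctly in your ``main obstacle'' discussion, so the idea is there, but the $\mathcal{A}_1$ paragraph as written would not compile into a rigorous argument in the boundary regime without spelling out that shift of center and the attendant radius inflation.
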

\begin{proof}
We first consider the case when $x_0$ belongs to the interior domain of $\Omega$, that means $B_{4R}(x_0) \subset \Omega$. Suppose that $v$ solves the following homogeneous problem
\begin{equation}\nonumber
\begin{cases} -\mathrm{div} \mathbb{A}(x,\nabla v) + \mathbb{V} |v|^{p-2}v & = \ 0, \quad \ \quad \mbox{ in } B_{4R}(x_0),\\ 
\hspace{2.2cm} v & = \ u - \mathsf{g}, \ \mbox{ on } \partial B_{4R}(x_0).\end{cases}
\end{equation}
Applying the comparison estimate~\eqref{est:Comp} in Lemma~\ref{lem:Comp}, for all $\varepsilon_1  \in (0,1)$ one can choose $k = k(\theta,p)>0$ satisfying
\begin{align}\label{est:4.4.2}
\fint_{B_{4R}(x_0)} \Psi_{\sigma}(u-v) dx  \le \varepsilon_1  \fint_{B_{4R}(x_0)} \Psi_{\sigma}(u)dx + C\varepsilon_1^{-k} \fint_{B_{4R}(x_0)} (|\mathbf{f}|^p + \Psi_{\sigma}(\mathsf{g})) dx.
\end{align}
With $\mathbb{V} \in \mathbb{RH}^{\theta}$ for some $\theta \in [\frac{n}{p}, n)$, thanks to~\eqref{est:RH} in Lemma~\ref{lem:RH}, if $[\mathbb{A}]^{r_0} \le \delta$ for some $r_0>0$ and $\delta$ small enough then
\begin{align}\label{est:4.4.1}
\left(\fint_{B_{2R}(x_0)}(\Psi_{\sigma}(v))^{\theta} dx\right)^{\frac{1}{\theta}}\leq C\fint_{B_{4R}(x_0)} \Psi_{\sigma}(v) dx.
\end{align}
It is easy to find $c_p = C(p)>0$ satisfying
\begin{align*}
\Psi_{\sigma}(u) \le c_p \left(\Psi_{\sigma}(u-v) + \Psi_{\sigma}(v)\right),
\end{align*}
which allows us to find the following decomposition
\begin{align}\nonumber
I & := d_{\mathbf{M}_{\alpha}(\chi_{B_{2R}(x_0)}\Psi_{\sigma}(u))}(\Omega_R(x_0), \varepsilon^{-a}\lambda)  \le  C d_{\mathbf{M}_{\alpha}(\chi_{B_{2R}(x_0)}\Psi_{\sigma}(u-v))}(\Omega_R(x_0), c_p^{-1}\varepsilon^{-a}\lambda) \\ & \hspace{4cm} + Cd_{\mathbf{M}_{\alpha}(\chi_{B_{2R}(x_0)}(\Psi_{\sigma}(v)))}(\Omega_R(x_0), c_p^{-1}\varepsilon^{-a}\lambda). \label{est:4.4.3}
\end{align}
Let us apply Lemma~\ref{lem:M_alpha} to two last terms in~\eqref{est:4.4.3}, one arrives that
\begin{align*}
I \le C &\left(\frac{1}{\varepsilon^{-a}\lambda}\int_{B_{2R}(x_0)}\Psi_{\sigma}(u-v)dx\right)^{\frac{n}{n-\alpha}} + C\left(\frac{1}{(\varepsilon^{-a}\lambda)^\theta}\int_{B_{2R}(x_0)}(\Psi_{\sigma}(v))^\theta dx\right)^{\frac{n}{n-\alpha\theta}}, 
\end{align*}
which will be rewritten in the average form of integral as below
\begin{align}
I\le C&\left(\frac{R^n}{\varepsilon^{-a}\lambda}\fint_{B_{4R}(x_0)}\Psi_{\sigma}(u-v)dx\right)^{\frac{n}{n-\alpha}} + C\left(\frac{R^n}{(\varepsilon^{-a}\lambda)^\theta}\fint_{B_{2R}(x_0)}(\Psi_{\sigma}(v))^\theta dx\right)^{\frac{n}{n-\alpha\theta}}. \label{est:4.4.4}
\end{align}
Substituting~\eqref{est:4.4.2} and~\eqref{est:4.4.1} into~\eqref{est:4.4.4} to arrive
\begin{align}\nonumber
I & \le C\left(\frac{R^n}{\varepsilon^{-a}\lambda}\right)^{\frac{n}{n-\alpha}}\left(\varepsilon_1  \fint_{B_{4R}(x_0)} \Psi_{\sigma}(u)dx + C\varepsilon_1^{-k} \fint_{B_{4R}(x_0)} (|\mathbf{f}|^p + \Psi_{\sigma}(\mathsf{g})) dx \right)^\frac{n}{n-\alpha}\\ 
& \hspace{3cm} + C\left(\frac{R^{\frac{n}{\theta}}}{\varepsilon^{-a}\lambda}\fint_{B_{4R}(x_0)} \Psi_{\sigma}(v)dx \right)^{\frac{n\theta}{n-\alpha\theta}}. \label{est:4.4.5}
\end{align}
Since $x_1$ and $x_2 \in B_R(x_0)$, it can be seen that $B_{4R}(x_0) \subset B_{5R}(x_1) \cap B_{5R}(x_2)$, therefore one has 
\begin{align}\label{est:4.4.6}
\fint_{B_{4R}(x_0)} \Psi_{\sigma}(u)dx & \le C \fint_{B_{5R}(x_1)} \Psi_{\sigma}(u)dx \notag \\
& \le C R^{-\alpha}\mathbf{M}_{\alpha}(\Psi_{\sigma}(u))(x_1) \le CR^{-\alpha}\lambda, 
\end{align}
and
\begin{align}\label{est:4.4.7}
\fint_{B_{4R}(x_0)} (|\mathbf{f}|^p + \Psi_{\sigma}(\mathsf{g})) dx & \le C\fint_{B_{5R}(x_2)} (|\mathbf{f}|^p + \Psi_{\sigma}(\mathsf{g})) dx \notag \\
& \le C R^{-\alpha}\mathbf{M}_{\alpha}(|\mathbf{f}|^p + \Psi_{\sigma}(\mathsf{g}))(x_2) \le CR^{-\alpha}\varepsilon^{b}\lambda.
\end{align}
Moreover, using \eqref{est:4.4.2} again with notice that $0<\varepsilon_1 <1$, there holds
\begin{align} \nonumber
\fint_{B_{4R}(x_0)} \Psi_{\sigma}(v)dx &\le C\left(\fint_{B_{4R}(x_0)} \Psi_{\sigma}(u)dx + \fint_{B_{4R}(x_0)}\Psi_{\sigma}(u-v)dx\right) \\
& \le C\left(\fint_{B_{4R}(x_0)} \Psi_{\sigma}(u)dx + \varepsilon_1^{-k}\fint_{B_{4R}(x_0)} (|\mathbf{f}|^p + \Psi_{\sigma}(\mathsf{g})) dx\right). \label{est:4.4.8}
\end{align}
Collecting estimates in~\eqref{est:4.4.6}, \eqref{est:4.4.7} and~\eqref{est:4.4.8}, one obtains from~\eqref{est:4.4.5} that
\begin{align}\nonumber
I & \le C\left(\frac{R^n}{\varepsilon^{-a}\lambda}\right)^{\frac{n}{n-\alpha}}\left(\varepsilon_1 R^{-\alpha}\lambda + \varepsilon_1^{-k}R^{-\alpha}\varepsilon^{b}\lambda \right)^{\frac{n}{n-\alpha}}\\ \nonumber
& \hspace{2cm} + C\left(\frac{R^\frac{n}{\theta}}{\varepsilon^{-a}\lambda}\left(R^{-\alpha}\lambda + \varepsilon_1^{-k} R^{-\alpha}\varepsilon^b\lambda \right)\right)^{\frac{n\theta}{n-\alpha\theta}} \\ \label{est:4.4.9}
& = C\left[\varepsilon^{\frac{an}{n-\alpha}}\left(\varepsilon_1 + \varepsilon_1^{-k}\varepsilon^b\right)^{\frac{n}{n-\alpha}} + \varepsilon^{\frac{an\theta}{n-\alpha\theta}}\left(1+\varepsilon_1^{-k}\varepsilon^b\right)^{\frac{n\theta}{n-\alpha\theta}}\right]R^n.
\end{align}
In~\eqref{est:4.4.9}, we can choose $b>0$ and $\varepsilon_1 \in (0,1)$ such that
\begin{align}\label{eq:4.4}
b > \max\left\{0; \ \left(1-a-\frac{\alpha}{n}\right)(k+1)\right\} \ \mbox{ and } \ \varepsilon_1 = \varepsilon^{\frac{b}{1+k}}.
\end{align}
With this choice of parameters and the initial assumption $a > \frac{1}{\theta}-\frac{\alpha}{n}$, one can check that
\begin{align*}
\frac{an\theta}{n-\alpha\theta} > 1, \quad   \varepsilon^{\frac{an}{n-\alpha}} \varepsilon_1^{\frac{n}{n-\alpha}} = \varepsilon^{\left(a + \frac{b}{1+k}\right)\frac{n}{n-\alpha}} \ \mbox{ with } \ \left(a + \frac{b}{1+k}\right) \frac{n}{n-\alpha} >1,
\end{align*}
which allows us to conclude~\eqref{est:lem 4.4} from~\eqref{est:4.4.9} for every $\varepsilon$ small enough.\\

Let us now consider the remaining case when $x_0$ is near $\partial\Omega$, that means $B_{4R}(x_0) \cap \partial\Omega \neq \varnothing$. In this case, there is $x_3 \in \partial\Omega$ such that $|x_3 - x_0|$ = dist$(x_0, \partial\Omega) \le 4R$. We denote by $\tilde{v}$ the solution to 
\begin{equation}\nonumber
\begin{cases} -\mathrm{div}  \mathbb{A}(x,\nabla \tilde{v})  + \mathbb{V} |\tilde{v}|^{p-2}\tilde{v} & = \ 0, \quad \ \quad \mbox{ in } \Omega_{24R}(x_3),\\ 
\hspace{2.2cm} \tilde{v} & = \ u - \mathsf{g} , \ \ \mbox{ on } \partial \Omega_{24R}(x_3).\end{cases}
\end{equation}
It is similar to~\eqref{est:4.4.3}, since $B_{2R}(x_0) \subset B_{6R}(x_3)$, one can apply Lemma~\ref{lem:M_alpha} to write
\begin{align*}
I &\le d_{\mathbf{M}_{\alpha}(\chi_{B_{6R}(x_3)} \Psi_{\sigma}(u)}(\Omega_R(x_0), \varepsilon^{-a}\lambda)\\
\nonumber &\le  Cd_{\mathbf{M}_{\alpha}(\chi_{B_{6R}(x_3)}\Psi_{\sigma}(u-\tilde{v}))}(\Omega_R(x_0), c_p^{-1}\varepsilon^{-a}\lambda) +
Cd_{\mathbf{M}_{\alpha}(\chi_{B_{6R}(x_3)}\Psi_{\sigma}(\tilde{v}))}(\Omega_R(x_0), c_p^{-1}\varepsilon^{-a}\lambda)\\
&\le C\left(\frac{1}{\varepsilon^{-a}\lambda}\int_{B_{6R}(x_3)} \Psi_{\sigma}(u-\tilde{v}) dx\right)^{\frac{n}{n-\alpha}} + C\left(\frac{1}{(\varepsilon^{-a}\lambda)^\theta}\int_{B_{6R}(x_3)} (\Psi_{\sigma}(\tilde{v}))^\theta dx\right)^{\frac{n}{n-\alpha\theta}},
\end{align*}
which gives us
\begin{align}\label{est:4.4.11}
I \le  C\left(\frac{R^n}{\varepsilon^{-a}\lambda}\fint_{B_{6R}(x_3)} \Psi_{\sigma}(u-\tilde{v}) dx\right)^{\frac{n}{n-\alpha}} +C\left(\frac{R^n}{(\varepsilon^{-a}\lambda)^\theta}\fint_{B_{6R}(x_3)}(\Psi_{\sigma}(\tilde{v}))^\theta dx\right)^{\frac{n}{n-\alpha\theta}}. 
\end{align}
Next we will use a similar technique as in the first case in which the idea comes from the combination of the local reverse H{\"o}lder and the comparison estimate near the boundary. More precisely, Lemma~\ref{lem:Comp} and~\ref{lem:RH-boundary} give us two following estimates
\begin{align}\label{est:4.4.12}
\left(\fint_{B_{6R}(x_3)}(\Psi_{\sigma}(\tilde{v}))^{\theta} dx\right)^{\frac{1}{p\theta}}\leq C\left(\fint_{B_{24R}(x_3)} \Psi_{\sigma}(\tilde{v}) dx\right)^{\frac{1}{p}},
\end{align}
and 
\begin{align}\label{est:4.4.13}
\fint_{B_{24R}(x_3)} \Psi_{\sigma}(u-\tilde{v}) dx  \le \varepsilon_1  \fint_{B_{24R}(x_3)} \Psi_{\sigma}(u)dx + C\varepsilon_1^{-k} \fint_{B_{24R}(x_3)} (|\mathbf{f}|^p + \Psi_{\sigma}(\mathsf{g})) dx,
\end{align}
for all $\varepsilon_1 \in (0,1)$. Similarly to the first case, we may use assumptions \eqref{est:cd 4.4} to show that
$$\fint_{B_{24R}(x_3)} \Psi_{\sigma}(u)dx \le CR^{-\alpha}\lambda \hspace{0.2 cm} \text{and} \hspace{0.2 cm} \fint_{B_{24R}(x_3)} (|\mathbf{f}|^p + \Psi_{\sigma}(\mathsf{g})) dx \le CR^{-\alpha}\varepsilon^b\lambda,$$
which with~\eqref{est:4.4.12} and~\eqref{est:4.4.13} ensures that
\begin{align}\label{est:4.4.14}
\fint_{B_{24R}(x_3)} \Psi_{\sigma}(u-\tilde{v}) dx  \le C\left(\varepsilon_1 +\varepsilon_1^{-k}\varepsilon^b\right)R^{-\alpha}\lambda,
\end{align}
and 
\begin{align}\nonumber
\fint_{B_{6R}(x_3)}(\Psi_{\sigma}(\tilde{v}))^\theta dx & \le C \left(\fint_{B_{24R}(x_3)}\Psi_{\sigma}(u)dx + \fint_{B_{24R}(x_3)} \Psi_{\sigma}(u-\tilde{v}) dx\right)^\theta \\ \nonumber
& \le C\left[\left(1+\varepsilon_1 +\varepsilon_1^{-k}\varepsilon^b \right)R^{-\alpha}\lambda\right]^\theta \\
& \le C\left[\left(1+\varepsilon_1^{-k}\varepsilon^b\right)R^{-\alpha}\lambda\right]^\theta. \label{est:4.4.15}
\end{align}
Substituting~\eqref{est:4.4.14} and~\eqref{est:4.4.15} into~\eqref{est:4.4.11} and then choosing the same parameters as in~\eqref{eq:4.4}, one also obtains~\eqref{est:lem 4.4} to complete the proof.
\end{proof}

\section{Regularity result in Lorentz spaces}\label{sec:proofs}

With lemmas presented in the previous sections, we are now able to prove the main theorems of this article. Let us first recall the well-known covering lemma that originally due to Vitali \cite{Vitali08}, and stated as a substitution of Calder\'on-Zygmund-Krylov-Safonov decomposition, see \cite[Lemma 4.2]{CC1995} for detailed proof.

\begin{lemma}[Covering Lemma]\label{lem:Cover}
Let $\Omega$ be $(\delta,r_0)$-Reifenberg for $r_0$, $\delta>0$ and two measurable sets $\mathcal{V}\subset \mathcal{W} \subset \Omega$. Assume that $0< \varepsilon <1$ and  $0< r \le r_0$ satisfying
\begin{itemize}
\item[i)] $\mathcal{L}^n\left(\mathcal{V}\right) \le \varepsilon \mathcal{L}^n\left(B_{r}(0)\right)$;
\item[ii)] $\forall x \in \Omega$ and $\varrho \in (0,r]$, if $\mathcal{L}^n\left(\mathcal{V} \cap B_{\varrho}(x)\right) > \varepsilon \mathcal{L}^n\left(B_{\varrho}(x)\right)$ then $\Omega \cap B_{\varrho}(x)  \subset \mathcal{W}$. 
\end{itemize}
Then there is $C=C(n)>0$ satisfying $\mathcal{L}^n\left(\mathcal{V}\right)\leq C \varepsilon \mathcal{L}^n\left(\mathcal{W}\right)$.
\end{lemma}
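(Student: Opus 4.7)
The plan is to adapt the Calder\'on--Zygmund--Krylov--Safonov decomposition: I attach to every Lebesgue density point of $\mathcal V$ a critical stopping-time radius, extract a disjoint Vitali subfamily, and convert the resulting ball measures back to $\mathcal L^n(\mathcal W)$ via the Reifenberg flatness of $\Omega$.

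The first step will be the stopping-time construction. For a.e.\ $x\in \mathcal V$ the Lebesgue density theorem gives $\mathcal L^n(\mathcal V\cap B_\varrho(x))/\mathcal L^n(B_\varrho(x))\to 1>\varepsilon$ as $\varrho\to 0^+$, while hypothesis (i) and translation invariance force $\mathcal L^n(\mathcal V\cap B_r(x))\le \mathcal L^n(\mathcal V)\le \varepsilon\mathcal L^n(B_r(0))=\varepsilon\mathcal L^n(B_r(x))$. I would therefore define
\[
\varrho_x\;=\;\sup\bigl\{\varrho\in (0,r]\,:\,\mathcal L^n(\mathcal V\cap B_\varrho(x))>\varepsilon\,\mathcal L^n(B_\varrho(x))\bigr\}\,\in\,(0,r],
\]
so that the density at every $\varrho\in (\varrho_x,r]$ is $\le \varepsilon$, while along a sequence $\varrho_k\nearrow \varrho_x$ the defining strict inequality persists. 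Hypothesis (ii) applied to each such $\varrho_k$ yields $\Omega\cap B_{\varrho_k}(x)\subset \mathcal W$, and the monotone union then gives $\Omega\cap B_{\varrho_x}(x)\subset \mathcal W$. A Vitali $5r$-covering lemma applied to $\{B_{\varrho_x}(x)\}$, with $x$ ranging over the density points of $\mathcal V$, extracts a countable disjoint subfamily $\{B_{\varrho_{x_i}}(x_i)\}_i$ whose $5$-dilations cover $\mathcal V$ up to a null set.

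Next I would recover the factor $\varepsilon$ on each dilated ball. If $5\varrho_{x_i}\le r$, the stopping time directly yields $\mathcal L^n(\mathcal V\cap B_{5\varrho_{x_i}}(x_i))\le 5^n\varepsilon\,\mathcal L^n(B_{\varrho_{x_i}}(x_i))$; in the opposite case $\varrho_{x_i}>r/5$ and hypothesis (i) still delivers the same bound since $\mathcal L^n(B_{\varrho_{x_i}}(x_i))\ge 5^{-n}\mathcal L^n(B_r(0))$. The $(\delta,r_0)$-Reifenberg flatness of $\Omega$ with $\delta$ small then supplies the standard lower density estimate $\mathcal L^n(\Omega\cap B_{\varrho_{x_i}}(x_i))\ge c(n)\,\mathcal L^n(B_{\varrho_{x_i}}(x_i))$, uniformly in $x_i\in\overline\Omega$ and $\varrho_{x_i}\le r_0$. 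Combining this with the disjointness of the $B_{\varrho_{x_i}}(x_i)$ and with the inclusion $\Omega\cap B_{\varrho_{x_i}}(x_i)\subset\mathcal W$ from the stopping time gives
\[
\mathcal L^n(\mathcal V)\le \sum_i\mathcal L^n\bigl(\mathcal V\cap B_{5\varrho_{x_i}}(x_i)\bigr)\le \frac{5^n}{c(n)}\,\varepsilon\sum_i \mathcal L^n\bigl(\Omega\cap B_{\varrho_{x_i}}(x_i)\bigr)\le C(n)\,\varepsilon\,\mathcal L^n(\mathcal W).
\]

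The main obstacle will be the non-attainment of the supremum in the stopping-time definition: the strict inequality generally fails at $\varrho=\varrho_x$, so the monotone limiting argument on $\varrho_k\nearrow \varrho_x$ is what converts the strict-inequality form of hypothesis (ii) into the key inclusion $\Omega\cap B_{\varrho_x}(x)\subset \mathcal W$. A secondary technical point is checking the Reifenberg lower-density bound uniformly for centres $x_i\in\overline\Omega$ at every scale $\le r_0$, which pins down how small $\delta$ must be chosen; this is a standard consequence of~\eqref{hyp:R} and does not depend on the PDE data, so once both issues are dispatched, the rest of the argument is a textbook Vitali assembly.
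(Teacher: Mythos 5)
The paper does not give its own proof of this lemma: it is stated with a citation to~\cite{CC1995} (Lemma~4.2) and related references, as a known substitute for the Calder\'on--Zygmund--Krylov--Safonov decomposition on Reifenberg flat domains. Your argument is correct and is exactly the standard stopping-time/Vitali proof that those references carry out: attaching to almost every point of $\mathcal V$ a critical radius where the density first drops to $\varepsilon$, extracting a disjoint Vitali subfamily whose $5$-dilations cover $\mathcal V$, controlling the measure of $\mathcal V$ in each dilated ball by $5^n\varepsilon$ times the original ball (splitting into the cases $5\varrho_{x_i}\le r$ and $5\varrho_{x_i}>r$), and finally passing from Lebesgue measure of the small balls to $\mathcal L^n(\Omega\cap B_{\varrho_{x_i}}(x_i))$ via the uniform lower measure density that~\eqref{hyp:R} furnishes for $\delta$ small. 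Your handling of the two technical points you flag is also right: the open set $\{\varrho:\mathrm{density}>\varepsilon\}$ yields a sequence $\varrho_k\nearrow\varrho_x$ in it, so the monotone union of the inclusions from hypothesis~(ii) gives $\Omega\cap B_{\varrho_x}(x)\subset\mathcal W$; and the Reifenberg lower density at interior centres follows by comparing with either the inscribed ball $B_{d}(x)$ or a nearby boundary ball, according to whether $d=\mathrm{dist}(x,\partial\Omega)$ exceeds $\varrho/2$. In short, this is the proof the paper is outsourcing, and you have reproduced it accurately.
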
 

\begin{proof}[Proof of Theorem \ref{theo:main-A}]
One can rewrite the inequality~\eqref{est:A} as the form
\begin{align*}
\mathcal{L}^n (\{x \in \Omega: \ \mathbf{M}_{\alpha}(\Psi_{\sigma}(u)) > \varepsilon^{-a}\lambda\}) & \le C \varepsilon \mathcal{L}^n\left(\left\{x \in \Omega: \ \mathbf{M}_{\alpha}(\Psi_{\sigma}(u)) > \lambda\right\}\right) \\
& \hspace{0.5cm} + \mathcal{L}^n (\{x \in \Omega: \ \mathbf{M}_{\alpha}(|\mathbf{f}|^p + \Psi_{\sigma}(\mathsf{g})) > \varepsilon^{b}\lambda\}),
\end{align*}
which may be easily seen as a consequence of $\mathcal{L}^n(\mathcal{V}_{\lambda}^{\varepsilon}) \le C\varepsilon\mathcal{L}^n(\mathcal{W}_{\lambda})$, where two measurable sets $\mathcal{V}_{\lambda}^{\varepsilon}$ and $\mathcal{W}_{\lambda}$ of $\Omega$ given as follows
$$\mathcal{V}_{\lambda}^{\varepsilon} = \lbrace x \in \Omega: \ \mathbf{M}_{\alpha}(\Psi_{\sigma}(u))(x) > \varepsilon^{-a}\lambda, \ \mathbf{M}_{\alpha}(|\mathbf{f}|^p + \Psi_{\sigma}(\mathsf{g}))(x)\le \varepsilon^{b} \lambda \rbrace,$$
$$\mathcal{W}_{\lambda} = \lbrace x \in \Omega: \ \mathbf{M}_{\alpha}(\Psi_{\sigma}(u))(x) > \lambda) \rbrace).$$

The basic idea is to use Lemma~\ref{lem:Cover} for $\mathcal{V}_{\lambda}^{\varepsilon}$ and $\mathcal{W}_{\lambda}$ with $\lambda>0$ and $\varepsilon >0$ small enough. More precisely, we only need to show that one can choose $b = b(a,\theta,\alpha,n,p) >0$ and $\varepsilon_0 = \varepsilon_0(a,b,n) \in (0,1)$ such that two following statements
\begin{itemize}
\item[i)] $\mathcal{L}^n\left(\mathcal{V_{\lambda}^{\varepsilon}}\right) \le \varepsilon \mathcal{L}^n\left(B_{r_0}(0)\right)$;
\item[ii)] $\forall x \in \Omega$ and $0 < R \le r_0/24$, if $\mathcal{L}^n\left(\mathcal{V_{\lambda}^{\varepsilon}} \cap B_{R}(x)\right) > \varepsilon \mathcal{L}^n\left(B_{R}(x)\right)$ then $\Omega \cap B_{R}(x)  \subset \mathcal{W_{\lambda}}$,
\end{itemize}
hold for $\varepsilon \in (0,\varepsilon_0)$.\\

Without restriction of generality, one may suppose that $\mathcal{L}^n\left(\mathcal{V}_{\lambda}^{\varepsilon}\right) \neq 0$ which gives us a point $x_1 \in \Omega$ satisfying  $\mathbf{M}_{\alpha}(|\mathbf{f}|^p + \Psi_{\sigma}(\mathsf{g}))(x_1) \le \varepsilon^{b}\lambda$. Thanks to Lemma~\ref{lem:A1}, one has
$$\mathcal{L}^n(\mathcal{V}_{\lambda}^{\varepsilon}) \le d_{\mathbf{M}_{\alpha}(\Psi_{\sigma}(u))}(\Omega_R(x), \varepsilon^{-a} \lambda) \le  \varepsilon \mathcal{L}^n(B_r(0)),$$
which guarantees i) for $\varepsilon$ small enough and $b > \max\left\{0; \ 1-a-\frac{\alpha}{n}\right\}$.\\
 
In order to show ii), we will prove that for $ R \in (0, r_0/24]$ and $x$ in $\Omega$ if $\Omega \cap B_{R}(x) \cap \mathcal{W_{\lambda}}^c \neq \varnothing$ then $\mathcal{L}^n\left(\mathcal{V_{\lambda}^{\varepsilon}} \cap B_{R}(x)\right) \le \varepsilon \mathcal{L}^n\left(B_{R}(x)\right)$. Indeed, if we can find $x_2 \in \Omega \cap B_R(x) \cap \mathcal{W_{\lambda}}^c$ and $x_3 \in \mathcal{V_{\lambda}^{\varepsilon}} \cap B_R(x)$ then
\begin{align*}
\mathbf{M}_{\alpha}(\Psi_{\sigma}(u))(x_2) \le \lambda \hspace{0.2 cm} \text{and} \hspace{0.2 cm} \mathbf{M}_{\alpha}(|\mathbf{f}|^p + \Psi_{\sigma}(\mathsf{g}))(x_3) \le \varepsilon^{b}\lambda.
\end{align*}
Using this fact, two Lemmas~\ref{lem:A2} an~\ref{lem:A3} give us the existence of $\varepsilon_0>0$ and $b>0$ such that for $0< \varepsilon <\varepsilon_0$ there holds
$$\mathcal{L}^n(\mathcal{V_{\lambda}^{\varepsilon}} \cap B_R(x)) \le d_{\mathbf{M}_{\alpha}(\chi_{B_{2R}(x)}\Psi_{\sigma}(u))}(\Omega_{R}(x_0), \varepsilon^{-a}\lambda) \le \varepsilon \mathcal{L}^n(B_R(0)).$$
The proof is finally complete by applying Lemma~\ref{lem:Cover}.
\end{proof}

\begin{proof}[Proof of Theorem \ref{theo:main-B}]
For every $0<q<\frac{n\theta}{n-\alpha\theta}$, one can take $a$ satisfying
\begin{align*}
\frac{1}{\theta} - \frac{\alpha}{n} < a < \frac{1}{q}.  
\end{align*}
With this value of $a$, one can apply Lemma~\ref{theo:main-A} to find $\delta > 0$, $b>0$ and $\varepsilon_0 \in (0,1)$ such that under hypothesis $(\mathcal{H}_{\delta})$, then the following estimate holds
\begin{align}\label{est:5.3.1}
d_{\mathbf{M}_{\alpha}(\Psi_{\sigma}(u))}(\varepsilon^{-a}\lambda) \le C\varepsilon d_{\mathbf{M}_{\alpha}(\Psi_{\sigma}(u))}(\lambda) + d_{\mathbf{M}_{\alpha}(|\mathbf{f}|^p + \Psi_{\sigma}(\mathsf{g}))}(\varepsilon^{b}\lambda),
\end{align}
for all positive $\lambda$ and $\varepsilon \in (0,\varepsilon_0)$. Changing of variable in the integral of the quasi-norm in $L^{q,s}(\Omega)$ for $0< s < \infty$, one has
\begin{align*}
\| \mathbf{M}_{\alpha}(\Psi_{\sigma}(u))\|_{L^{q,s}(\Omega)}^s &= q\int_{0}^{\infty}\lambda^{s-1}( d_{\mathbf{M}_{\alpha}(\Psi_{\sigma}(u))}(\lambda))^{\frac{s}{q}} {d\lambda} \\
& = \varepsilon^{-as}q\int_{0}^{\infty}\lambda^{s-1} (d_{\mathbf{M}_{\alpha}(\Psi_{\sigma}(u))}(\varepsilon^{-a}\lambda))^{\frac{s}{q}} {d\lambda},
\end{align*}
which with~\eqref{est:5.3.1} to arrive
\begin{align}\nonumber
\|\mathbf{M}_{\alpha}(\Psi_{\sigma}(u))\|_{L^{q,s}(\Omega)}^s & \le C\varepsilon^{-as + \frac{s}{q}}q\int_{0}^{\infty}\lambda^{s-1}( d_{\mathbf{M}_{\alpha}(\Psi_{\sigma}(u))}(\lambda))^{\frac{s}{q}} {d\lambda} \\ \label{est:5.3.2}
& \hspace{2cm} + C\varepsilon^{-as}q\int_{0}^{\infty}\lambda^{s-1}( d_{\mathbf{M}_{\alpha}(|\mathbf{f}|^p + \Psi_{\sigma}(\mathsf{g}))}(\varepsilon^{b}\lambda))^{\frac{s}{q}} {d\lambda}.
\end{align}
Let us change of variable again in~\eqref{est:5.3.2} and combine with a basic inequality, one has
\begin{align}\label{est:5.3.3}
\|\mathbf{M}_{\alpha}(\Psi_{\sigma}(u))\|_{L^{q,s}(\Omega)} & \le C\varepsilon^{-a + \frac{1}{q}}\| \mathbf{M}_{\alpha}(\Psi_{\sigma}(u))\|_{L^{q,s}(\Omega)} \notag \\
& \hspace{2cm} + C\varepsilon^{-a-b}\|\mathbf{M}_{\alpha}(|\mathbf{f}|^p + \Psi_{\sigma}(\mathsf{g}))\|_{L^{q,s}(\Omega)}.
\end{align}
We note here the inequality~\eqref{est:5.3.3} still holds even for the case $s = \infty$. Moreover, with the choice of $a$ at the beginning of the proof it is possible to choose $\varepsilon$ in~\eqref{est:5.3.3} such that $C\varepsilon^{-a+\frac{1}{q}} \le \frac{1}{2}$, to obtain \eqref{est:B}. 
\end{proof}


\end{document}